
\documentclass[a4paper,11pt,twoside,dvips]{amsart}

\usepackage{mathrsfs}
\usepackage{float}
\pagestyle{myheadings}
\usepackage{color}
\usepackage{amssymb}
\usepackage{amsfonts}
\usepackage{amsthm}
\usepackage{amsmath}
\usepackage{newlfont}
\usepackage{graphicx}
\usepackage{graphics}
\usepackage{graphpap}
\usepackage[english, francais]{babel}
\usepackage{srcltx}
\input xy
\xyoption{all}

\newtheorem{theorem}{Theorem}
\newtheorem{lemma}{Lemma}
\newtheorem{proposition}{Proposition}
\newtheorem{remark}{Remark}

\selectlanguage{english}

\begin{document}

\title{Completeness is determined by any non-algebraic trajectory}

\selectlanguage{english}

\author{Alvaro Bustinduy}
\address{Departamento de Ingenier{\'\i}a Industrial \newline
         \indent Escuela Polit{\'e}cnica Superior \newline
         \indent Universidad Antonio de Nebrija \newline
         \indent C/ Pirineos 55, 28040 Madrid. Spain}
\email{abustind@nebrija.es}
\author{Luis Giraldo}
\address{Departamento de Geometr{\'\i}a y Topolog{\'\i}a \newline
         \indent Facultad de Ciencias Matem\'aticas \newline
         \indent Universidad Complutense de Madrid \newline
         \indent Plaza de Ciencias 3, 28040 Madrid. Spain}
\email{luis.giraldo@mat.ucm.es}

\thanks{2000 {\it Mathematics Subject Classification.} Primary 32M25;
Secondary 32L30, 32S65}
\thanks {{\it Key words and phrases.} Complete vector field,
complex orbit, holomorphic foliation}
\thanks{Supported by Spanish MICINN
projects MTM2010-15481, MTM2011-26674-C02-02.}

\begin{abstract}
\selectlanguage{english} It is proved that any polynomial vector
field in two complex variables which is complete on a
non-algebraic trajectory is complete.
\end{abstract}

\dedicatory{Dedicated to the memory of Marco Brunella}

\selectlanguage{english}

\maketitle \markright{COMPLETE VECTOR FIELDS}

\section{Introduction and Statement of Results}

Let $X$ be a holomorphic vector field on $\mathbb{C}^2$. For any
$z\in\mathbb{C}^{2}$, the local solution $\varphi_{z}(T)$ of the
associated ordinary differential equation $dz/dT=X(z(T))$ with the
initial condition $z(0)=z\in\mathbb{C}^{2}$ can be extended by
analytic continuation along paths in $\mathbb{C}$, to a maximal
domain $\Omega_z$, which may not be an open set of $\mathbb{C}$,
but rather a Riemann domain over $\mathbb{C}$. The map
${\varphi}_z:\Omega_z \rightarrow \mathbb{C}^2$ is the {\em
solution} of $X$ through $z$, and its image $\varphi_z(\Omega_z)$,
that will be denoted by $C_z$ (or $L_{z}$, $R_z$, $S_z$), is the
{\em trajectory} of $X$ through $z$.

The vector field $X$ is complete on $C_z$ if
$\Omega_z=\mathbb{C}$, and $X$ is {\em complete} if it is complete
on $C_z$, for every $z\in\mathbb{C}^2$. Each trajectory $C_z$ on
which $X$ is complete (complete trajectory) is defined by an
abstract Riemann surface uniformized by $\mathbb{C}$, and  by the
maximum principle, analytically isomorphic to $\mathbb{C}$ or
$\mathbb{C}^{\ast}$.

Extrinsically, the topology of a trajectory
 can be very complicated. The simplest
trajectories from this point of view are the analytic ones. One
says that the trajectory $C_{z}$ is \em analytic \em if it is
contained in an analytic curve in $\mathbb{C}^{2}$ (but not
necessarily equal to it, due to the possible presence of
singularities). Otherwise $C_{z}$ is a \em
non-analytic  \em trajectory.

An interesting remark (due to R. Moussu) is that two vector fields
with a common non-analytic trajectory have to be collinear in any
point. In this sense a non-analytic trajectory determines the
vector field up to multiplication by a nonvanishing holomorphic
function.

In this work we will consider \em polynomial vector fields with at
most isolated zeros. \em The above remark for two polynomial
vector fields can be restated. For a trajectory, it is enough not
to be contained in an algebraic curve (that is, to be a \em
non-algebraic \em trajectory) to determine the vector field up to
multiplication by a nonzero constant.

In \cite{Brunella-topology2}, M. Brunella studied foliations in
$\mathbb{C}^{2}$ given by polynomial vector fields with a
trajectory containing a planar isolated end  (proper Riemann
sub-surface isomorphic to $\{z: r<||z||\leq{1}\}$, where $r\in
[0,1)$), properly embedded in $\mathbb{C}^2$ and whose closure in
$\mathbb{CP}^{2}$ contains the line at infinity. He proved that
these foliations can be determined in terms of a polynomial whose
generic fiber is of type $\mathbb{C}$ or $\mathbb{C}^{\ast}$ and
transversal to the foliation. As remarkable corollary, he obtained
that if the trajectory is a non-algebraic analytic plane, the
foliation is given by the constant vector field after an analytic
automorphism. So the trajectory in this case is determining the
completeness of the vector field up to multiplication by a nowhere
vanishing function.

Then if one attends to the completeness of a non-algebraic
trajectory (not necessarily analytic) the following natural
question arises \cite[Question 5.1]{Bustinduy-indiana}:

\vspace{0.25cm}

\em \noindent {\bf Question 1.} If $X$ is a polynomial vector
field in $\mathbb{C}^2$ with the property of being complete on a
single non-algebraic trajectory, is it complete? \em

\vspace{0.25cm}

The main result of this work says that Question 1 has an affirmative answer:
\begin{theorem}
\label{principal} Let us consider a polynomial vector field $X$ on
$\mathbb{C}^2$  which is
complete on a non-algebraic trajectory. Then $X$ is complete.
\end{theorem}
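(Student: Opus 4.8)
The starting observation is that, because $X$ has at most isolated zeros, the remark of Moussu recalled in the introduction says that \emph{any} polynomial vector field with at most isolated zeros inducing the same foliation $\mathcal F_X$ as $X$ is a constant multiple of $X$: two such fields are collinear at every point, hence proportional through a rational function whose divisor of zeros and poles is trivial. So it suffices to produce \emph{one} complete polynomial vector field $Y$, with at most isolated zeros, whose underlying foliation is $\mathcal F_X$; then $X=cY$ for some $c\in\mathbb C^{\ast}$ and $X$ is complete. In this way the problem is reduced to a statement purely about the foliation $\mathcal F_X$ plus the datum that it carries a complete leaf.

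\emph{Step 1 (geometry of the complete trajectory).} Let $C$ be the non-algebraic trajectory on which $X$ is complete. Since $C$ is not a point, $X$ does not vanish on $C$, $\Omega_C=\mathbb C$, and by the maximum principle $C$ is biholomorphic to $\mathbb C$ or to $\mathbb C^{\ast}$; in particular $C$ has one or two ends, and each of them is a planar isolated end. The first thing to establish is that at least one of these ends is \emph{properly embedded in} $\mathbb C^2$ and has closure in $\mathbb{CP}^2$ meeting the line at infinity $L_\infty$ in more than a point: a failure of properness would produce a nonempty $\mathcal F_X$-invariant limit set inside $\mathbb C^2$, which is incompatible with completeness of the flow along $C$ (cf.\ \cite{Bustinduy-indiana}); and once $C$ is proper, non-algebraicity forces $\overline C\cap L_\infty$ to be infinite, hence equal to $L_\infty$.

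\emph{Step 2 (Brunella's structure theorem and the complete generator).} By Step 1 the hypotheses of \cite{Brunella-topology2} are satisfied, so $\mathcal F_X$ is determined by a polynomial $f\colon\mathbb C^2\to\mathbb C$ whose generic fibre is biholomorphic to $\mathbb C$ or to $\mathbb C^{\ast}$ and is transversal to $\mathcal F_X$. Using the classification of $\mathbb C$- and $\mathbb C^{\ast}$-fibrations of the plane (due to Suzuki and Kaliman) one normalises $f$, after which $\mathcal F_X$ is, away from finitely many fibres, the horizontal foliation of a flat structure on $f$ with fibre $\mathbb C$ or $\mathbb C^{\ast}$; its holonomy then acts by affine maps of $\mathbb C$, respectively by maps $w\mapsto\lambda w^{\pm1}$ of $\mathbb C^{\ast}$, and it is non-trivial precisely because $C$ is non-algebraic. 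The key point — and this is where the existence of a \emph{complete} leaf is used — is to show that this configuration is tame enough that the parallel-transport vector field generating $\mathcal F_X$, after division by the greatest common divisor of its components, is a polynomial vector field $Y$ with at most isolated zeros which is complete on all of $\mathbb C^2$ (in the $f=x$ case $Y$ is essentially $\partial_x$ plus a vertical term controlled by the connection; the $\mathbb C^{\ast}$-fibration case requires tracking the multiple fibres and the monodromy around them). Combined with the first paragraph this yields $X=cY$ and proves the theorem.

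The main obstacle is exactly the final assertion of Step 2: turning ``$\mathcal F_X$ has a complete leaf'' into ``$\mathcal F_X$ is generated by a complete polynomial vector field with at most isolated zeros''. This means going through Brunella's normal forms case by case and ruling out every configuration — for instance Riccati-type foliations whose generic leaf is an elliptic curve, or whose completeness would require a time change with poles over a special fibre — that is transverse to such a fibration yet admits no globally complete generator. Establishing the properness and the behaviour at $L_\infty$ of $C$ in Step 1 is a secondary, more technical, ingredient.
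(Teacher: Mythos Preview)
Your Step~1 contains a fatal gap: a complete non-algebraic trajectory need \emph{not} have a properly embedded end. Take $X = z\,\partial/\partial z + \sqrt{2}\,w\,\partial/\partial w$. This polynomial vector field is complete with a single isolated zero, and the trajectory through any $(z_0,w_0)$ with $z_0 w_0\neq 0$ is the image of $T\mapsto(e^{T}z_0,\,e^{\sqrt{2}\,T}w_0)$, a non-algebraic copy of $\mathbb{C}$. Restricting to $T=it$, $t\in\mathbb{R}$, one sees this trajectory is dense in the real torus $\{|z|=|z_0|,\ |w|=|w_0|\}$; no end of it is proper in $\mathbb{C}^2$. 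Your assertion that ``a failure of properness would produce a limit set incompatible with completeness'' is therefore false: completeness of the flow along $C$ in no way prevents $C$ from accumulating on invariant curves or on higher-dimensional sets inside $\mathbb{C}^2$. Consequently the hypotheses of \cite{Brunella-topology2} are unavailable and the whole strategy of Step~2 collapses.

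This is not a technicality. The paper explicitly observes that the route through \cite{Brunella-topology2} settles the question only for a non-algebraic \emph{analytic} (hence proper) trajectory, which had already been done in \cite{Bustinduy-indiana}, \cite{Bustinduy-jde}; the entire point of the present theorem is to treat the general case. The paper's actual proof is of a quite different nature: one resolves the extension of $\mathcal{F}$ to $\mathbb{CP}^2$, notes that the Zariski-dense entire curve $\tilde{C}_z$ forces $\textnormal{kod}(\tilde{\mathcal{F}})\in\{0,1\}$, and then runs a substantial case analysis using the McQuillan--Brunella classification. When $\textnormal{kod}=1$ one excludes the turbulent case and shows the Riccati fibration descends to a rational map of type $\mathbb{C}$ or $\mathbb{C}^{\ast}$ on $\mathbb{C}^2$; when $\textnormal{kod}=0$ one passes to a finite cover where the foliation is generated by a global holomorphic vector field on a torus, an elliptic $\mathbb{CP}^1$-bundle, or a rational surface, and proves (via a metric/distance argument, among other tools) that $D\cup E$ is $\tilde{\mathcal{F}}$-invariant, from which completeness of $X$ follows. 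The Riccati normal form you aim for in Step~2 is only one of several terminal branches of this analysis, not its starting point.
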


Note that our theorem implies that any entire solution of a polynomial
vector field can be determined up to an algebraic automorphism of
$\mathbb{C}^2$. As the vector field is complete, the solution must
correspond to one of the vector fields of the Brunella's
classification in \cite{Brunella-topology} after a polynomial
automorphism.

It could be very interesting to study if a non-analytic trajectory of a (non-polynomial) holomorphic vector field
determines the completeness of the vector field.

\subsection*{About the proof of Theorem~\ref{principal}}
For the sake of completeness, throughout the paper we include
some definitions and results taken from
Brunella's papers \cite{Brunella-impa}, \cite{Brunella-ensp} and
\cite{Brunella-topology}. Let us begin by
recalling some definitions. Consider the foliation
$\mathcal{F}$ generated by $X$ on $\mathbb{C}^2$ extended to
$\mathbb{CP}^2$. According to Seidenberg's Theorem, the minimal
resolution of $\mathcal{F}$ is a new foliation
$\tilde{\mathcal{F}}$ defined on a rational surface $M$ after
pulling back $\mathcal{F}$ by a birational morphism $\pi: M
\rightarrow \mathbb{CP}^2$, that is a finite composition of
blowing ups. Along with this resolution one has: 1) The Zariski
open set $U=\pi^{-1}(\mathbb{C}^2)$ of $M$, over which $X$ can be
lifted to a holomorphic vector field $\tilde{X}$, 2) \em the
exceptional divisor \em $E$ of $U$, and 3) the \em
divisor at infinity \em
$$D=M\setminus U = {\pi}^{-1}(\mathbb{CP}^2\setminus\mathbb{C}^2)=
{\pi}^{-1}(L_{\infty}),$$ that is a tree of smooth rational
curves. The vector field $\tilde{X}$ can be extended to $M$,
although it may have poles along one or more components of $D$.
Let us still denote this extension by $\tilde{X}$.
As only singularities of the foliation in $\mathbb{C}^2$
are blown up, and they are in  the zero set of $X$, the vector field $\tilde{X}$ is holomorphic
on the full $U$ and it has the complete trajectory $\tilde{C}_z$
defined by $\pi^{-1}(C_z)$.

Therefore the reduced foliation $\tilde{\mathcal{F}}$ has at least
one tangent entire curve: the one defined by $\tilde{C}_z$, which
is Zariski dense in $M$. It implies  that \em the Kodaira
dimension \em $\textnormal{kod}(\tilde{\mathcal{F}})$ of
$\tilde{\mathcal{F}}$ is $1$ or $0$ \cite[\S IV]{Mc} (\em see \em
also \cite[p.\,131]{Brunella-impa}).

In case $\textnormal{kod}(\tilde{\mathcal{F}})=1$,
\cite{Brunella-topology} allows to conclude that
$\tilde{\mathcal{F}}$ is a Riccati foliation adapted to a
fibration $g: M \to \mathbb{P}^1$, whose projection to
$\mathbb{C}^2$ by $\pi$ defines a rational function $R$ of type
$\mathbb{C}$ or $\mathbb{C}^{\ast}$. We can apply the
study of \cite{Bustinduy-indiana} although $R$ is not a polynomial
(\em see \em also \cite{Bustinduy-ijm})  and deduce the
completeness of $X$. We will analyze this case in $\S2$.

In case $\textnormal{kod}(\tilde{\mathcal{F}})=0$, we know that
$\tilde{\mathcal{F}}$ is generated by a vector field on a smooth
compact projective surface $S$, up to contractions of
$\tilde{\mathcal{F}}$-invariant curves and covering maps
\cite{Brunella-topology}. However, we need to go a bit further to
know if these models restrict to our open $U$ a complete vector
field. This is accomplished via the description of the irreducible
components of $D\cup E$ that are not
$\tilde{\mathcal{F}}$-invariant. When $S$ is rational, we show
that in fact $D\cup E$ must be invariant if
$\tilde{\mathcal{F}}$ is not Riccati with respect to a
fibration $g: M \to \mathbb{P}^1$ that is projected to
$\mathbb{C}^2$ by $\pi$ as a rational function $R$ of type
$\mathbb{C}$ or $\mathbb{C}^{\ast}$. For the remaining cases,
i.e. when $S$ is a $\mathbb{CP}^1$-bundle over an elliptic curve
or a complex $2$-torus, we prove that $D\cup
E$ is always invariant by $\tilde{\mathcal{F}}$. For the proof
of this last fact we will consider $S$ as a differential manifold
with a certain Riemannian metric. It will enable us to compute the
distance from the complete trajectory to a compact set containing
the components of $D\cup E$ that are not
$\tilde{\mathcal{F}}$-invariant. As a consequence of the
discussion above one obtains that the lifted of $\tilde{X}$ by a
certain covering map can be decomposed in the product of a
complete rational vector field by a second integral of it. It
allows us to conclude that the projection $\pi_{\ast} \tilde{X}$
restricted to $U$ i.e. $X$ must be complete. We will analyze this
case in $\S3$.

Finally, we point out that \cite{Bustinduy-indiana}, \cite{Bustinduy-jde}
imply that Question 1 has an affirmative answer for a non-algebraic analytic trajectory.
In those works, Brunella's results \cite{Brunella-topology2} are used as the main tool.
The proof of our theorem is mainly based on Brunella's approach to the classification complete polynomial vector fields in the plane
\cite{Brunella-topology}, since they can be applied to the
foliation $\mathcal{F}$ although $X$ could be in principle
not complete. Theorem~\ref{principal} is not only the generalization
of the previous results  mentioned above (\cite{Bustinduy-indiana}, \cite{Bustinduy-jde}), but its proof also implies them.

\subsection*{Acknowledgements}  This article is dedicated to the memory of Marco Brunella, whose deep mathematical work about complete vector fields has been crucial to
obtain our results. We also appreciate a lot his generosity in several mathematical conversations during these last years.
We received the sad notice of his death when we were preparing the revised version of this article.

Finally, we also want to thank the referee for his suggestions that have improved
this paper a lot.

\section{\bf {\textnormal{kod}($\mathcal{F}$)=1}}
According to \cite[\S IV]{Mc} the absence of a first integral
implies that $\tilde{\mathcal{F}}$ is \em a Ricatti or a Turbulent
foliation, \em that is to say, the existence of a fibration
$$g: M\to B$$
whose generic fibre is a rational curve or an elliptic
curve transverse to $\tilde{\mathcal{F}}$, respectively. \em Remark
that $B$ is $\mathbb{CP}^1$  since $M$ is a rational surface. \em

\subsection{Nef models and Canonical models
\cite[\S III]{Mc}, \cite[\S\,4]{Brunella-ensp}, \cite[\S\,3]{Brunella-topology}} $\quad$

\noindent \em Existence of a nef model. \em  As $\tilde{\mathcal{F}}$ is not
a rational fibration it has a model \em $\hat{\mathcal{F}}$ which
is \em reduced and nef. More concretely, after a contraction
$s:M\to \hat{M}$ of the $\tilde{\mathcal{F}}$-invariant rational
curves on $M$ over which the canonical bundle
$K_{\tilde{\mathcal{F}}}$ has negative degree one obtains (\em see
\em \cite[\S\,4]{Brunella-ensp}, \cite[\S\,3]{Brunella-topology}):

\begin{enumerate}

\item [1)] A new surface $\hat{M}$, maybe with \em cyclic quotient singularities; \em and

\item [2)] A reduced foliation
$\hat{\mathcal{F}}=s_{\ast}\tilde{\mathcal{F}}$ on $\hat{M}$ such
that its canonical $\mathbb{Q}$\,-bundle $K_{\hat{\mathcal{F}}}$
is \em nef \em (i.e. $K_{\hat{\mathcal{F}}}\cdot C \geq{0}$  for
any curve $C\subset\hat{M}$).

\end{enumerate}

Recall that a cyclic quotient singularity $p$ of $\hat{M}$ is
locally defined by  $\mathbb{B}^{2}/{\Gamma}_{k,h}$ where
$\mathbb{B}^2\subset\mathbb{C}^2$ is the unit ball and
${\Gamma}_{k,h}$ is the cyclic group generated by a map of the
form $(z,w)\to (e^{\frac{2\pi i }{k}}z,e^{\frac{2\pi i}{k}h}w)$ with
$k,h$ positive coprime integers such that $0 < h < k$. \em These
singularities of $\hat{M}$ are not singularities of
$\hat{\mathcal{F}}$. \em That is, the foliation can be lifted
locally to $\mathbb{B}^2\setminus\{(0, 0)\}$ and extended to a
foliation on $\mathbb{B}^2$ with a non-vanishing associated vector
field.

\begin{remark}\label{ciclicas} \em
The possible cyclic singularities of $\hat{M}$ are in the image of
the exceptional divisor of $s$. Any rational curve $C_0$ of that
divisor is $\tilde{\mathcal{F}}$-invariant, it has an unique
singularity $p$ of the foliation of type $d(x^ny^m)$ with $n,m\in
\mathbb{N}^{+}$, where $C_0=\{y=0\}$, and it may also contain one
cyclic quotient singularity $q$ of order $m$ (regular point if
$m=1$). After contracting $C_0$ by $s$ (since $C_{0}^{2} =-n/m$)
we obtain a new quotient singularity of order $n$ (regular if
$n=1$) \cite[pp.\,443\,-444]{Brunella-topology}. \em
\end{remark}

\noindent \em Existence of a minimal model. \em After possibly
additional contractions on $\hat{M}$ of rational curves,
$q:\hat{M}\to N$, one obtains a reduced foliation
$\mathcal{H}=q_{\ast}\hat{\mathcal{F}}$ (birational to
$\tilde{\mathcal{F}}$) on a surface $N$ regular on the (cyclic
quotient) singularities of $N$ whose canonical bundle
$K_{\mathcal{H}}$ is nef and such that it verifies this property:
if $K_{\mathcal{H}}\cdot C= 0\Rightarrow C^2 \geq{0} $ for any
curve $C\subset N$. It is important to note that we can assume
that $q$ is given by contractions of \em curves which are
invariant by the foliation: \em if $C$ is not
$\hat{\mathcal{F}}$-invariant it follows from the formula
$(K_{\hat{\mathcal{F}}}+C)\cdot C\geq {0}$ \cite[\S\,2]{Brunella-ensp}
that $K_{\hat{\mathcal{F}}}\cdot C= 0\Rightarrow C^2 \geq{0}$. This
model is the \em minimal model \em of $\tilde{\mathcal{F}}$.

$$
\xymatrix{   M \ar[d]^{s}  \ar[r]^{\pi}  &  \mathbb{CP}^2   &  \\
                 \hat{M}   \ar[r]_{q}             &    N     }
$$

\begin{remark}\label{nef} \em
In general the minimal model of $\tilde{\mathcal{F}}$ \em is not
unique. \em However if we have another minimal model
$\mathcal{H}'$ of $\tilde{\mathcal{F}}$ defined on $N'$ and
$p:N\to N'$ is an algebraic map defined everywhere with
$p_{\ast}\mathcal{H}=\mathcal{H}'$ then $p$ is an isomorphism
\cite[Lemma III.3.1]{Mc}. \em
\end{remark}

\begin{remark}\label{contraction} \em
As $s$ and $q$ are given by contractions of rational curves which
are invariant by the foliation neither $\tilde{C}_z$ meets the
exceptional divisor of $s$ nor $s(\tilde{C}_z)$ meets the
exceptional divisor of $q$. It implies that there must be  a
parabolic leaf of $\mathcal{H}$: the
leaf that contains the Riemman surface $q(s(\tilde{C}_z))$ that
supports the complete vector field $q_{\ast}(s_{\ast}(
\tilde{X}_{\mid \tilde{C}_z}))$. \em
\end{remark}

\subsection{Turbulent case}
When $X$ is complete the case of a Turbulent $\tilde{\mathcal{F}}$
can be excluded as it is proved in
\cite[Lemma1]{Brunella-topology}. We now prove that it continues
being valid in a more general situation.

\begin{lemma}
\label{lema1} $\tilde{\mathcal{F}}$ is not a Turbulent foliation
\end{lemma}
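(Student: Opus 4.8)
The plan is to reproduce the scheme of \cite[Lemma 1]{Brunella-topology}, with the single complete trajectory $\tilde C_z$ --- which is an entire, parabolic curve, Zariski dense in $M$ --- now playing the role that the completeness of $X$ played there. Assume, for a contradiction, that $\tilde{\mathcal{F}}$ is turbulent. By the discussion opening this section there is a fibration $g\colon M\to B$ whose generic fibre is an elliptic curve transverse to $\tilde{\mathcal{F}}$, and $B=\mathbb{CP}^1$ since $M$ is rational. Let $p_1,\dots,p_k\in B$ be the finitely many points over which the fibre is singular or fails to be transverse to $\tilde{\mathcal{F}}$; by the structure of turbulent foliations these exceptional fibres are $\tilde{\mathcal{F}}$-invariant (away from isolated tangencies). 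Put $B_0=B\setminus\{p_1,\dots,p_k\}$. Over $B_0$, $g$ is a proper submersion with compact elliptic fibres to which $\tilde{\mathcal{F}}$ is transverse, so by Ehresmann's lemma the restriction of $\tilde{\mathcal{F}}$ to $g^{-1}(B_0)$ is a flat $E$-bundle: every leaf is an unramified holomorphic covering of $B_0$, and the transverse structure is encoded by a monodromy $\rho\colon\pi_1(B_0)\to\mathrm{Aut}(E)$, whose target is virtually abelian.

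First I would pin down $B_0$. On one hand, $M$ is a blow-up of $\mathbb{CP}^2$, so $\chi_{\mathrm{top}}(M)\ge 3>0$, while in the turbulent model $\chi_{\mathrm{top}}(M)=\sum_i\chi_{\mathrm{top}}\bigl(g^{-1}(p_i)\bigr)$ since the generic fibre is a torus; hence $k\ge 1$ and at least one $g^{-1}(p_i)$ is a genuine singular fibre. On the other hand, as $\tilde C_z$ is Zariski dense it is not contained in a fibre, hence it avoids the invariant fibres $g^{-1}(p_i)$ and lies entirely over $B_0$; therefore $g|_{\tilde C_z}\colon\tilde C_z\to B_0$ is a covering map, and since $\tilde C_z$ is parabolic ($\cong\mathbb{C}$ or $\cong\mathbb{C}^{\ast}$) the surface $B_0$ is covered by $\mathbb{C}$. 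A Zariski-open subset of $\mathbb{CP}^1$ that is covered by $\mathbb{C}$ and different from $\mathbb{CP}^1$ must be $\mathbb{C}$ or $\mathbb{C}^{\ast}$, so $k\in\{1,2\}$.

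Then I would carry out the monodromy dichotomy. If $B_0=\mathbb{C}$, then $\rho$ is trivial, $\bigl(g^{-1}(B_0),\tilde{\mathcal{F}}\bigr)\cong\bigl(\mathbb{C}\times E,\ \text{horizontal foliation}\bigr)$, every leaf over $B_0$ is a copy of $\mathbb{C}$ whose closure in $M$ is an algebraic curve, and $\tilde C_z$ would be algebraic, contradicting its Zariski density. If $B_0=\mathbb{C}^{\ast}$, then $\rho$ is determined by a single automorphism $\gamma=t_a\circ\ell$ of $E$, with $t_a$ a translation and $\ell$ a group automorphism of order $1,2,3,4$ or $6$. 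If $\gamma$ has finite order, each leaf over $B_0$ is a finite covering of $\mathbb{C}^{\ast}$, hence $\cong\mathbb{C}^{\ast}$, again with algebraic closure in $M$, so $\tilde C_z$ would be algebraic. If $\gamma$ has infinite order then necessarily $\ell=\mathrm{id}$ (otherwise $\gamma^{\operatorname{ord}(\ell)}=\mathrm{id}$) and $a$ is non-torsion, so $\gamma$ acts trivially on $H_1(E;\mathbb{Z})$; thus the geometric monodromy of $g$ around $p_1$ and around $p_2$ is trivial, and since the relatively minimal model of $g$ is a rational elliptic surface --- which has a section, hence no multiple fibre --- the fibres $g^{-1}(p_1)$, $g^{-1}(p_2)$ must be smooth, so $g$ has no singular fibre at all, contradicting $\chi_{\mathrm{top}}(M)>0$. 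Every case being contradictory, $\tilde{\mathcal{F}}$ is not turbulent.

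I expect the main obstacle to be twofold. The first point is the structural input used above: that the exceptional fibres of a turbulent foliation are $\tilde{\mathcal{F}}$-invariant (away from isolated tangencies), so that the Zariski-dense entire leaf $\tilde C_z$ really lies over $B_0$ and is an honest covering of it; this is exactly where one must do something beyond Brunella's lemma, since one no longer knows that \emph{every} trajectory is complete. The second delicate point is the case $B_0=\mathbb{C}^{\ast}$ with $\gamma$ of infinite order, where one has to combine the purely translational --- hence homologically trivial --- monodromy of the flat $E$-bundle with the obstruction, coming from the rationality of $M$, to an elliptic fibration on $M$ having no singular fibre.
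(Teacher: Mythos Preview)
Your proposal has a genuine gap at its opening structural step: the assertion that every non-generic fibre of $g$ is $\tilde{\mathcal{F}}$-invariant. This is false for turbulent foliations. In Brunella's fibre classification \cite[\S 7]{Brunella-ensp}, which the paper invokes, after passing to a birational model $M^{\ast}$ the fibres fall into six classes $(a)$--$(f)$; classes $(a)$, $(b)$, $(c)$ are \emph{transversal} to the foliation, and among them $(a)$ may be multiple while $(b)$, $(c)$ are rational with quotient singularities and multiplicity $2$, $3$, $4$ or $6$. These are ``singular'' in your sense, yet the leaf $\tilde C_z$ meets them. Consequently $g|_{\tilde C_z}$ is not an unramified covering of your $B_0$: it is a regular covering only in the \emph{orbifold} sense, with the base carrying orbifold points at the transversal multiple fibres. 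You therefore cannot conclude that $B_0$ is $\mathbb{C}$ or $\mathbb{C}^{\ast}$ as a Riemann surface; it could perfectly well be $\mathbb{CP}^1$ with a parabolic orbifold structure such as $(2,4,4)$ or $(2,2,2,2)$. This is precisely why the paper's Claim~1 requires the hypothesis $\operatorname{kod}(\tilde{\mathcal{F}})=1$: it forces $\chi_{\mathrm{orb}}(B)<0$ when there is no tangent fibre, hence the orbifold $B$ is hyperbolic, contradicting the parabolicity of $\tilde C_z$. Your argument via $\chi_{\mathrm{top}}(M)>0$ only produces a fibre that is not a reduced smooth elliptic curve; it does not produce a tangent one, and it does not use $\operatorname{kod}=1$ at all.

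There is a second, smaller gap: in the infinite-order monodromy branch you assert that the relatively minimal model ``is a rational elliptic surface --- which has a section, hence no multiple fibre''. Rational elliptic surfaces need not have sections; Halphen pencils of index $m>1$ are rational elliptic surfaces with one multiple fibre $mI_0$ and no section. The conclusion you actually want --- that a rational surface cannot carry an elliptic fibration all of whose fibres are of type $mI_0$ --- follows more directly from $\chi_{\mathrm{top}}(M)=\sum_i\chi_{\mathrm{top}}\bigl(g^{-1}(p_i)\bigr)=0$, without any mention of sections. But even after this repair, the first gap still blocks the argument.

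For comparison, the paper's route is different in kind. Having obtained a genuine tangent fibre $G_0$ via $\operatorname{kod}=1$, it argues on the divisor $D\cup E$: the generic elliptic fibre $G$ must meet $D$ (maximum principle), and any non-invariant component of $D$ meeting $G$ would force $\tilde X\equiv 0$ (because $\tilde C_z$ accumulates on all of $G_0$, hence on that component, so $\tilde X$ vanishes there while being holomorphic near $G$). Thus some invariant component of $D$ cuts $G$, and from this one manufactures a rational first integral for the turbulent $\tilde{\mathcal{F}}$ exactly as in \cite[p.~438]{Brunella-topology}, contradicting the Zariski density of $\tilde C_z$. The leverage comes from the geometry of $D$ relative to the fibration, not from a monodromy dichotomy on $B_0$.
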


\begin{proof}
Suppose  that $\tilde{\mathcal{F}}$ is Turbulent. The description
of models around each fibre of $g$  after a birational morphism
$\alpha: M\rightarrow M^{\ast}$ is known
\cite[\S\,7]{Brunella-ensp}. The resulted foliation
$\mathcal{G}=\alpha_{\ast}\tilde{\mathcal{F}}$ on $M^{\ast}$ is
regular on the (cyclic quotient) singularities of $M^{\ast}$, it
is Turbulent with respect to $\bar{g}=g\circ {\alpha}^{-1}$, and
each fiber of $\bar{g}$ is of one of the following classes:

\noindent $(a)$ (resp. $(d)$): the fibre is smooth elliptic, transversal (resp. tangent) to
$\mathcal{G}$ and may be multiple.

\noindent $(b)$ (resp. $(e)$): the fibre is rational with three quotient singularities of orders
$k_1$, $k_2$ and $k_3$ satisfying $\frac{1}{k_1}+\frac{1}{k_2}+\frac{1}{k_3}=1$, transversal (resp. tangent)
to $\mathcal{G}$ and of multiplicity $3$, $4$ or $6$.

\noindent $(c)$ (resp. $(f)$): the fibre is rational with four quotient singularities of order $2$;
transversal (resp. tangent) to $\mathcal{G}$ and of multiplicity $2$.

We will call classes $(a)$, $(b)$ and $(c)$ (resp. $(d)$, $(e)$
and $(f)$) as \em transversal fibres (resp. tangent fibres) of
$\bar{g}$. \em

For any leaf $L$ of $\mathcal{G}$ outside tangent fibres of
$\bar{g}$, ${\bar{g}}_{\mid L}:L\to B_{0}$, with
$B_{0}$ defined as $B$ minus the points over tangent fibres of $\bar{g}$, is
a regular covering (in orbifold's sense). The orbifold structure
in $B_{0}$ is the natural structure inherited from the orbifold
structure on $B$ induced by (the local models of) $\bar{g}$
\cite[\S\,7]{Brunella-ensp}.

\noindent \em Claim 1: There must be at least one tangent fibre
$G_{0}$ of $\bar{g}$. \em

We suppose that all the fibres are transversal and obtain a
contradiction. Since $B_{0}=B=\mathbb{CP}^1$, the orbifold
universal covering of any leaf $L$, $\tilde{L}$, is equal to the
one of $B$, $\tilde{B}$.

Let us suppose that $\tilde{B}$ is
$\mathbb{C}$ or $\mathbb{CP}^{1}$. By pulling back sections of
$K_{B}$ under $\bar{g}$ we obtain sections of $K_{\mathcal{G}}$.
We can in this way compute $K_{\mathcal{G}}$
and obtain that
$deg(\bar{g}_{\ast}K_{\mathcal{G}})=-\chi_{orb}(B)$ (see,
\cite[\S\,7]{Brunella-ensp}). On the other hand since
$\textnormal{kod}(\tilde{\mathcal{F}})=\textnormal{kod}(\mathcal{G})=1$
then $deg(\bar{g}_{\ast}K_{\mathcal{G}})>0$. It follows that $\chi_{orb}(B)<0$, what is impossible if $B$
is parabolic (see Appendix E, Lemma E.4, in \cite{Milnor}). Thus $\tilde{B}$ is a disk.

As all the leaves of $\mathcal{G}$ are
hyperbolic and the singularities are isolated (in fact
$\mathcal{G}$ is regular), $K_{\mathcal{G}}$  is nef \cite[Remark
8.8]{Brunella-springer}. Moreover, it is clear that
$K_{\mathcal{G}}\cdot C= 0\Rightarrow C^2 \geq{0}$: If $C$ is not
$\mathcal{G}$-invariant it follows from the formula
$(K_{\mathcal{G}}+C)\cdot C\geq {0}$ \cite[\S\,2]{Brunella-ensp}.
If $C$ is $\mathcal{G}$-invariant the Camacho-Sad Formula
\cite[\S\,2]{Brunella-ensp} implies that $C^{2}=0$ because
$\mathcal{G}$ is regular on $C$. Therefore $\mathcal{G}$ is a
minimal model of $\tilde{\mathcal{F}}$. But then it has necessarily a parabolic leaf (Remarks $2$ and $3$), which is a contradiction.

 \noindent \em Claim 2. If there is an irreducible component $D_{1}$ of
$D\cup E$ that is not
$\tilde{\mathcal{F}}$-\,invariant and which is not contained in
any fiber of $g$, then $D_{1}\subset \{\tilde{X}=0\}$. \em

It is important to note that the strict transform of $D_{1}$ by
$\alpha$, that we also denote by $D_{1}$, is a rational
curve. Otherwise it is a point with infinitely many punctured
disks invariant by $\mathcal{G}$ through it and then a singularity
of $\mathcal{G}$, which is not possible. Hence
$D_{1}\cap G_{0}\neq \emptyset$. Let us denote by
$J$ the leaf of $\mathcal{G}$ that defines the non
algebraic component of $\alpha(\tilde{C}_z)$. There is at least
one accumulation point of $J$ on ${G}_{0}$
because $\bar{g}(J)= g(\tilde{C}_z)$  is
$\mathbb{C}$ or $\mathbb{C}^{\ast}$. It must be a regular point of
the foliation by the absence of singularities of the foliation on
tangent fibers. Thus $J$ must accumulate on
${G}_{0}$. It implies that $\tilde{C}_z\cap D_{1}\neq
\emptyset$ and then $D_{1}\subset \{\tilde{X}=0\}$ by the completeness
of $\tilde{X}_{\mid \tilde{C}_z}$.

Let us take the generic fiber $G$ of $g$, which is transverse to
$\tilde{\mathcal{F}}$. Obviously, $D\cap G\neq \emptyset$. In the
contrary case we have an elliptic curve contained in
$\mathbb{C}^2$, which is impossible (maximum principle). Among the
irreducible components of $D$ cutting $G$ at least one, say
$D_{2}$, is $\tilde{\mathcal{F}}$-\,invariant. Otherwise
$\tilde{X}$ would be holomorphic in a neighborhood of $G$ and it
vanishes on at least one component of $D$ transversal to $G$, what
implies that $\tilde{X}$ is identically zero by Claim 2. The
existence of ${D}_{2}$ is enough to construct a rational integral
for a Turbulent $\tilde{\mathcal{F}}$ as it can be seen in
\cite[p.438]{Brunella-topology}.
\end{proof}

\subsection{Ricatti case}

\begin{lemma}\label{lema2}
$g_{\mid U}$ is projected by $\pi$ as a rational function $R$ of
type $\mathbb{C}$ or $\mathbb{C}^{\ast}$. Moreover, $\tilde{\mathcal{F}}$
is $R$\,-complete.
\end{lemma}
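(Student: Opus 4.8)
The plan is to carry over the argument of \cite[\S\,3]{Brunella-topology} to our situation, checking at each step that it uses only the foliation $\tilde{\mathcal F}$ and the behaviour of $\tilde X$ along the single complete leaf $\tilde C_z$, and never the completeness of $\tilde X$ on all of $U$. Write $R:=\pi_{\ast}(g_{|U})$ for the rational map $\mathbb C^2\dashrightarrow\mathbb P^1$ induced by $g_{|U}$. One has to verify three things: that $R$ is a genuine non-constant rational function, that its generic fibre is isomorphic to $\mathbb C$ or to $\mathbb C^{\ast}$, and that the pair $(\tilde{\mathcal F},R)$ is $R$-complete in the sense of \cite{Bustinduy-indiana}.

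For the first point, $D$ cannot lie in a single fibre of $g$, for otherwise a generic fibre $G$ of $g$ would be disjoint from $D$ and $\pi(G)$ would be a projective curve inside $\mathbb C^{2}$ (impossible by the maximum principle); the same remark shows $\pi(G)$ meets $L_{\infty}$, i.e. $G\cap D\neq\emptyset$. That $g_{|U}$ really factors through $\pi_{|U}$ (equivalently, that $g$ is constant on each component of the exceptional divisor $E$) follows from $\tilde C_z\cap E=\emptyset$ ---a non-constant trajectory avoids $\{X=0\}\supseteq\pi(E)$--- together with the dichotomy used in the proof of Lemma~\ref{lema1}: a component of $E$ not contained in a fibre of $g$ would be a multisection meeting the generic fibre $G$, and if it is not $\tilde{\mathcal F}$-invariant Claim~2 of Lemma~\ref{lema1} forces it into $\{\tilde X=0\}$ and hence $\tilde X\equiv 0$, while if it is $\tilde{\mathcal F}$-invariant one reaches a contradiction exactly as in \cite[\S\,3]{Brunella-topology}.

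For the second point, a generic fibre of $R$ is $\pi(G)\setminus(\pi(G)\cap L_{\infty})$, i.e. the rational curve $G$ minus the set-theoretic intersection $G\cap D$, so it is enough to prove $\#(G\cap D)\le 2$. If $G$ met $D$ in at least three points, the generic fibre of $R$ would be hyperbolic. On the other hand $\tilde{\mathcal F}$ is Riccati adapted to $g$, so away from the finitely many fibres of $g$ not transverse to $\tilde{\mathcal F}$ the restriction of $g$ to any leaf is an orbifold covering of $\mathbb P^{1}$ minus the special points; applying this to $\tilde C_z$, which is parabolic (a complete trajectory is $\cong\mathbb C$ or $\cong\mathbb C^{\ast}$) and Zariski dense, the image $g(\tilde C_z)$ omits at most two points of $\mathbb P^{1}$ and the orbifold base of $g$ may carry punctures and cone points only in a configuration that is orbifold-covered by a parabolic surface. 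Feeding this, together with $\textnormal{kod}(\tilde{\mathcal F})=1$, into the combinatorics of the tree $D$ over $L_{\infty}$ through which $G$ passes, one excludes three or more set-theoretic intersection points, so $R$ is of type $\mathbb C$ or $\mathbb C^{\ast}$.

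For the last point, along each leaf the Riccati structure exhibits $g$ as a covering of $\mathbb P^{1}$ minus the special values, so $\tilde X$ pushes forward to a well-defined rational vector field on the base; its completeness along every leaf not meeting a special fibre follows from its completeness along $\tilde C_z$, whose image is cofinite in $\mathbb P^{1}$, and translating back through $\pi$ gives precisely the $R$-completeness needed to invoke \cite{Bustinduy-indiana}. I expect the main obstacle to be the second point: a parabolic Zariski dense leaf of a $\textnormal{kod}=1$ Riccati foliation can occur only when some non-transverse fibres of $g$ are dicritical ---the leaf crossing a non-invariant component of the fibre and picking up ramification there--- so the bound $\#(G\cap D)\le 2$ is not automatic and genuinely requires a careful case analysis of the special fibres of $g$ and of their images under $\pi$ near the line at infinity.
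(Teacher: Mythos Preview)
Your plan has the right skeleton and you correctly isolate the crux as the bound $\#(G\cap D)\le 2$, but what you offer for that step is a hope, not an argument: parabolicity of the orbifold base $B_0$ covered by $\tilde C_z$ constrains the configuration of \emph{special fibres} of $g$, not the number of components of $D$ that are horizontal to $g$, and ``the combinatorics of the tree $D$ over $L_\infty$'' by itself will not close the gap you yourself flag at the end. Your invocation of Claim~2 of Lemma~\ref{lema1} is also misplaced: that claim lives in the Turbulent setting and relies on accumulation on a tangent elliptic fibre; it has to be reproved here with the Riccati local models.

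The paper supplies a concrete mechanism you are missing. From $\textnormal{kod}(\tilde{\mathcal F})=1$ together with parabolicity of $\tilde C_z$, \cite[Lemma~2]{Brunella-topology} forces a tangent fibre $G_0$ of class $(d)$ or $(e)$: a rational curve carrying one or two saddle-nodes whose strong separatrices lie inside $G_0$. Since $g(\tilde C_z)$ is $\mathbb C$ or $\mathbb C^{\ast}$, the leaf $\tilde C_z$ accumulates on $G_0$, and by the Martinet--Ramis description of saddle-node dynamics it accumulates on \emph{all} of $G_0$. Now take any component $D_1$ of $D\cup E$ not contained in a fibre of $g$; then $D_1\cap G_0\neq\emptyset$. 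If $D_1$ is $\tilde{\mathcal F}$-invariant, it cannot lie in the pole divisor of $\tilde X$: otherwise $D_1\cap G_0$ is a saddle-node and one constructs a path in $\tilde C_z$ converging to it along which the differential of times (the rational section of $K_{\tilde{\mathcal F}}$ dual to $\tilde X$) has finite integral, contradicting completeness of $\tilde X$ on $\tilde C_z$. If $D_1$ is not $\tilde{\mathcal F}$-invariant, then either $\tilde C_z$ meets $D_1$ (since $\tilde C_z$ accumulates all of $G_0$) and completeness forces $D_1\subset\{\tilde X=0\}$, or $D_1\cap G_0$ is a saddle-node with $D_1$ as weak separatrix---but then $D_1$ is invariant after all. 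Either way $\tilde X$ is holomorphic near the generic fibre $G$, so a non-invariant $D_1$ would give $\tilde X\equiv 0$. Hence every horizontal component of $D\cup E$ is $\tilde{\mathcal F}$-invariant; each meets $G_0$ at a saddle-node as its weak separatrix, and there are at most two such, whence $\#(G\cap D)\le 2$. This same invariance of the horizontal components is exactly what gives $R$-completeness (the leaves trivialise the generic fibres of $R$), not the transfer of completeness from $\tilde C_z$ to other leaves that you sketch in your third point, which is unjustified as stated.
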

\begin{proof}
Up to contraction of rational curves inside fibers of $g$, which
can produce cyclic quotient singularities of the surface but on
which the foliation is always regular, one has that there are five
possible models for the fibers of $g$ \cite[\S\,7]{Brunella-ensp},
\cite[p.\,439]{Brunella-topology}. Let $L_{0}$ be the leaf of the
foliation defined by $\tilde{C}_z$. One can conclude that the
orbifold universal covering $\tilde{L}_{0}$ of $L_{0}$ is equal to the one of
$B_{0}$, $\tilde{B}_{0}$, where
$B_{0}$ is defined as $\mathbb{CP}^{1}$ minus the points over tangent fibres of $g$
with the natural orbifold structure
inherited from the orbifold structure on $\mathbb{CP}^1$ induced
by (the local models of) $g$. Since $X$ is complete on $C_z$,
$\tilde{L}_{0}$ is biholomorphic to $\mathbb{C}$ and then $L_{0}$ is parabolic.
This fact along
with $\textnormal{kod}(\tilde{\mathcal{F}})=1$ implies by
\cite[Lemma 2]{Brunella-topology} that there must be at least one
fibre ${G}_{0}$ tangent to the foliation of class:

\noindent $(d)$: the fibre is rational with  two saddle-nodes of
the same multiplicity $m$, with strong separatrices inside the
fibre, or of class

\noindent $(e)$: the fibre is rational with two quotient
singularities of order $2$, and a saddle-node of multiplicity $l$,
with strong separatrix inside the fibre.

Firstly one observes that there are irreducible components of
$D\cup E$ that are not contained in any fiber of $g$. Let us take
the generic fiber $G$ of $g$, which is transverse to
$\tilde{\mathcal{F}}$. Obviously, $D \cap G\neq \emptyset$. In the
contrary case we have a rational curve contained in
$\mathbb{C}^2$, which is impossible (maximum principle).

Let $D_{1}$ be one of these components.
Then $D_{1}\cap G_{0}\neq \emptyset$ and there is at
least one accumulation point of $\tilde{C}_z$ on
${G}_{0}$, say $p$, because $g(\tilde{C}_z)$ is
$\mathbb{C}$ or $\mathbb{C}^{\ast}$. If $p$ is a regular point of
the foliation, $\tilde{C}_z$ must accumulate on ${G}_{0}$.
If $p$ is singular, it is a saddle-node with strong separatrix
defined by ${G}_{0}$, and therefore $\tilde{C}_z$ must
also accumulate on all ${G}_{0}$ \cite{MR}, in particular
in the other saddle node if it exists. There are two
possibilities:

\noindent $(i)$ \em If $D_{1}$ is $\tilde{\mathcal{F}}$-invariant, $D_{1}$
is not in the divisor of poles of $\tilde{X}$. \em  Otherwise,
$D_{1}\cap {G}_{0}\neq \emptyset$ is a saddle-node $q$.  Let us
take the rational section $\omega$ of $K_{\tilde{\mathcal{F}}}$
dual to $\tilde{X}$ that restricts to $\tilde{C}_z$ as the \em
differential of times \em given by the flow of $\tilde{X}$ on
$\tilde{C}_z$. One can construct a path $\gamma:(0,\epsilon]\to
\tilde{C}_z$, with $\epsilon\in \mathbb{R}^{+}$ and $\gamma(t) \to
q $ as $t\to 0$, such that $\int _ {\gamma} \omega$ is finite (\em
see \em \cite[proof of Lemma 3]{Brunella-topology}), which
contradicts the completeness of $\tilde{X}$ on $\tilde{C}_z$.

\noindent $(ii)$ \em If $D_{1}$ is not
$\tilde{\mathcal{F}}$-invariant, necessarily $\tilde{C}_z\cap
D_{1}\neq \emptyset$ and $D_{1}\subset \{\tilde{X}=0\}$. \em
Otherwise, as $\tilde{C}_z\cap D_{1} =\emptyset$ one has that
$D_{1}\cap G_{0}\neq \emptyset$ is a saddle-node with $D_{1}$
defining its weak separatrix, which is
$\tilde{\mathcal{F}}$-invariant
\cite[Lemma 11]{Brunella-topology2}.

It follows from $(i)$ and
$(ii)$ that $\tilde{X}$ is holomorphic in a neighborhood of
$G$, what implies as in above lemma that $(ii)$ does not
really occurs. Thus $D_{1}$ is $\tilde{\mathcal{F}}$-invariant.

Therefore $D$ must cut $G$ at one or two points, and the
projection $R$ of $g_{\mid U}$ via $\pi$ is of type $\mathbb{C}$
or $\mathbb{C}^{\ast}$. Moreover, the invariancy of the components
of $D\cup E$ which are not contained in fibers of $g$ implies that
generically $R$ is a fibration trivialized by the leaves of
$\tilde{\mathcal{F}}$, and then $\tilde{\mathcal{F}}$ is
$R$-complete.
\end{proof}

We will study the two possibilities after the previous lemma.

\subsection{$R$ of type $\mathbb{C}$}

By Suzuki (\em see \em \cite{Suzuki-japonesa}) we may assume that
$R=x$, up to a polynomial automorphism. Hence $\mathcal{F}$ is a
Riccati foliation adapted to $x$ and $X$ is a complete vector
field of the form
\begin{equation}
\label{tipoc}
Cx^N\frac{\partial}{\partial x}
+[A(x)y+B(x)]\frac{\partial}{\partial y},\,\,
\end{equation}
with $C\in\mathbb{C}$, $N=0$, $1$ and $A$, $B\in\mathbb{C}[x]$
(\em see \em \cite[Proposition 4.2]{Bustinduy-indiana}).

\subsection{$R$ of type $\mathbb{C}^{\ast}$}
By Suzuki (\em see \em \cite{Suzuki-anales}) we may assume that
$$R=x^ {m}(x^{\ell}y+p(x))^{n},$$
where $m\in\mathbb{N}^\ast$, $n\in\mathbb{Z}^{\ast}$, with
$(m,n)=1$, $\ell\in\mathbb{N}$, $p\in\mathbb{C}[x]$ of degree $<
\ell$ with $p(0)\neq{0}$ if $\ell>0$ or $p(x)\equiv{0}$ if
$\ell=0$, up to a polynomial automorphism.

\subsubsection*{New coordinates} According to relations
$ x=u^n\,\,\,\,\textnormal{and}\,\,\,\,x^{\ell}y+p(x)=v \,u^{-m}$
it is enough to take the rational map $H$ from $u\neq {0}$ to
$x\neq{0}$ defined by
\begin{equation}\label{relaciones}
(u,v)\mapsto (x,y)=(u^n, {u^{-(m+n\ell)}} [v-u^m p(u^n)])
\end{equation}
in order to get $R\circ H(u,v)=v^n$.

Although $R$ is not necessarily a polynomial ($n\in\mathbb{Z}$),
it is a consequence of the proof of \cite[Proposition 3.2]
{Bustinduy-indiana} that $H^{\ast}\mathcal{F}$ is a Riccati
foliation adapted to $v^n$ having $u=0$ as invariant line. Thus

\begin{equation} \label{hest}
\begin{split}
H^{\ast} X = &  u^{k}\cdot Z \\
           =& u^{k} \cdot \left\{a(v)u \frac{\partial}{\partial u} +
c(v)\frac{\partial}{\partial v}\right\},
\end{split}
\end{equation}
where $k\in\mathbb{Z}$, and $a$, $c\in\mathbb{C}[v]$.

At this point one could apply the techniques of
\cite{Bustinduy-jde} to analyze the possible global $1$-forms of
times associated to $X$ in order to prove the existence of an
invariant line. However, applying directly the local models of
\cite{Brunella-topology}, it follows from
\cite[Lemma\,2]{Bustinduy-ijm} that at least one of the
irreducible components of $R$ over $0$ must be a
$\mathcal{F}$-invariant line. Hence the polynomial $c(v)$ of
(\ref{hest}) is in fact a monomial, and thus of the form $cv^N$
with $c\in\mathbb{C}$ and $N\in\mathbb{N}$.

Finally, according to \cite[pp.\,661-662]{Bustinduy-indiana} we
know that $X_{\mid C_z}$ complete implies $k=0$ and $N=0,1$. Hence
$X$ is complete.

\section{\bf {\textnormal{kod}($\mathcal{F}$)=0}}

According to \cite[\S III and \S IV]{Mc} we can contract
$\tilde{\mathcal{F}}$-invariant rational curves on $M$ via a
contraction $s$ to obtain a new surface $\hat{M}$ (maybe singular
with cyclic quotient singularities), a reduced foliation
$\hat{\mathcal{F}}$ on this surface, and a finite covering map $r$
from a smooth compact projective surface $S$ to $\hat{M}$ such that: 1) $r$ ramifies
only over cyclic (quotient) singularities of $\hat{M}$ and 2) the
foliation $r^{\ast}(\hat{\mathcal{F}})$ is generated by a complete
holomorphic vector field $Z_{0}$ on $S$ with isolated zeroes \cite
[p.\,443]{Brunella-topology}.
$$
\xymatrix{  \mathbb{CP}^2 & \ar[l]_{\pi} M  \ar[d]^{s} &  \\
                          &    \hat{M}                 & \ar[l]_{r} S}
$$

\begin{remark}\label{levantada} \em Note that $\tilde{C}_z$ does not meet
the exceptional divisor of the contraction $s$. Let us set
$\hat{C}_z$ as  $s(\tilde{C}_z)$. Since $\hat{C}_z$ does not
contain singularities of $\hat{M}$ then $\hat{C}_z$ is a Riemann
Surface, $s_{\mid \tilde{C}_z}: \tilde{C}_z \to \hat{C}_z$ is a
biholomorphism and $r_{\mid r^{-1}(\hat{C}_z)}: r^{-1}(\hat{C}_z)
\to \hat{C}_z$ is a non-ramified finite covering map. Thus
$s_{\ast}( \tilde{X}_{\mid \widetilde{C}_z})$ is complete on
$\hat{C}_z$ and $r^{\ast} (s_{\ast}( \tilde{X}_{\mid
\tilde{C}_z}))$ is complete on the connected components
$\mathscr{M}_i$ of $r^{-1}(\hat{C}_z)=\cup_{i=0}^{l}
\mathscr{M}_{i}$. Hence each $\mathscr{M}_i$ is a Riemann Surface
contained in a complete trajectory $T_{z}$ of $Z_0$  that supports the
complete vector field $r^{\ast} (s_{\ast}( \tilde{X}_{\mid
\tilde{C}_z}))_{\mid\mathscr{M}_{i}}$, which does not necessarily
coincide with ${Z_0}_{\mid \mathscr{M}_i}$. It is convenient to observe that if $T_{z}$ is isomorphic to $\mathbb{C}^{\ast}$ then, necessarily
$\mathscr{M}_i=T_{z}$, and the vector field $r^{\ast} (s_{\ast}( \tilde{X}_{\mid
\tilde{C}_z}))_{\mid\mathscr{M}_{i}}$ coincides with $Z_{0}$ on $T_{z}$, up to a multiplicative constant. The discrepancy between the two complete vector fields
can occur only if $T_{z}$ is isomorphic to $\mathbb{C}$, in which case $r^{\ast} (s_{\ast}( \tilde{X}_{\mid
\tilde{C}_z}))_{\mid\mathscr{M}_{i}}$ could have one (and only one) zero at some point $p=T_{z}\setminus \mathscr{M}_{i}$.

\em
\end{remark}

It follows from \cite [p.\,443]{Brunella-topology} that the
covering $r$ can be lifted to $M$ via a birational morphism $g:W
\to S$ and a ramified covering $h:W \to M$ such that $s \circ h =
r \circ g$.
$$
\xymatrix{  M\ar[d]_{s}
& \ar[l]_{h} W \ar[dl]_{s \circ h}^{r \circ g} \ar[d]^{g} \\
  \hat{M} & \ar[l]^{r} S}
$$
Let $Y$ be the lift of $Z_0$ on $W$ via $g$. Then $Y$ must be a
\em rational vector field \em on $W$ generating the foliation
$\bar{\mathcal{F}}$ given by
$g^{\ast}(r^{\ast}(\hat{\mathcal{F}}))=h^{\ast}\tilde{\mathcal{F}}$.
On the other hand, $\bar{{\mathcal{F}}}$ is also generated by the
rational vector field $\bar{X}$ on $W$ given by
$h^{\ast}\tilde{X}$. Hence there is a rational function $F$ on $W$
such that
\begin{equation}
\label{tangentes} \bar{X} = F\cdot Y.
\end{equation}

\begin{remark}
\label{segundas} \em We remark from the above construction:
\begin{enumerate}
\item [{1)}] The map $g$ is a composition of blowing-ups over a
finite set $\Theta=\{ {\theta_{i}} \}_{i=1}^{s}\subset S$ of
regular points of $Z_0$. The poles of $Y$ are in $g^{-1 }(\Theta)$
and they define a divisor $P \subset W$ invariant by
$\bar{\mathcal{F}}$. Hence $Y$ is holomorphic on $W\setminus P$.
Note that in $W\setminus P$, $Y$ has only isolated zeroes.

\item [{2)}]  Since $P$ is the exceptional divisor of $g$, $h(P)$
is the exceptional divisor of $s$ and is
$\tilde{\mathcal{F}}$-invariant. Then
$$
h_{\mid W\setminus P}: W\setminus P\to M\setminus h(P)
$$
is a regular covering map.

\item [{3)}]  Let $C_ {\theta_{i}}$ be the trajectory of $Z_{0}$
through $\theta_{i}$. $Y$ is a complete holomorphic vector field
on $W\setminus \{ g^{-1}(C_ {\theta_{i}}) \}_{i=1}^{s}$. Each
$g^{-1}(C_ {\theta_{i}})\setminus P$ is contained in a trajectory
$R_{{z}_{i}}$ of $Y$.  Let us fix one of them, say $C_
{\theta_{j}}$. Let us set $\Theta \cap C_
{\theta_{j}}={\{\theta_{j_{l}}\}}_{l=0}^{h}$ taking $j_{0}=j$.
Note that $R_{{z}_{j_{l}}}=R_{{z}_{j}}$ for any $l$. For every
${\theta}_{j_{l}}$ there is a point ${\bar{\theta}_{j_{l}}} \in P$
such that $R_{{z}_{j}} \cup \,\{\bar{\theta}_{j_{l}}\}$ defines a
separatrix of $\bar{{\mathcal{F}}}$ through
$\bar{\theta}_{j_{l}}$. Note that $\bar{\theta}_{j_{l}}$ is the
unique singular point of $\bar{{\mathcal{F}}}$ in $P$ such that
$g(\bar{\theta}_{j_{l}})=\theta_{j_{l}}$. We can take around
$\bar{\theta}_{j_{l}}$ a neighbourhood $U$ and coordinates $(z,w)$
such that $\bar{{\mathcal{F}}}$ is generated by $z
\partial/\partial z - w
\partial/\partial w$ where $(R_{{z}_{j}} \cup
\,\{\bar{\theta}_{j_{l}}\})\cap U =\{w=0\}$ and
$g^{-1}({\theta}_{j_{l}})\cap U = \{z=0\}$. As $Y$ has a pole of
order one along $\{z=0\}$, it follows that
$$
Y_{\mid {R}_{{z}_{j}}} =  \frac{\partial}{\partial z} -
\frac{w}{z} \frac{\partial}{\partial w}
$$
is not complete. However, it extends on ${R_{{z}_{j}} \cup
{\{{\bar{\theta}}_{j_{l}}\}}_{l=0}^{h}}$ as a complete vector
field  because $g$ restricted to $R_{{z}_{j}}$ extends to
$R_{{z}_{j}} \cup {\{{\bar{\theta}}_{j_{l}}\}}_{l=0}^{h}$ as a
biholomorphism onto $C_{\theta_{j}}$ and
\begin{equation}\label{restrict}
{(g_{\mid R_{{z}_{j}} \cup
{\{{\bar{\theta}}_{j_{l}}\}}_{l=0}^{h}})}^{\ast} {Z_{0}}_{\mid C_
{\theta_{j}}}= Y_{ \mid R_{{z}_{j}} \cup
{\{{\bar{\theta}}_{j_{l}}\}}_{l=0}^{h}}.
\end{equation}

\em
\end{enumerate}
\end{remark}
\subsection*{Global holomorphic vector fields \cite{Brunella-impa}}
The list of holomorphic vector fields with isolated
singularities on compact complex surfaces is well known.
In \cite[Chapter\,6]{Brunella-impa} we can find the details when the surface is projective.
In particular, for $Z_{0}$ on $S$ we have one of the following possibilities:

\noindent I) $S$ has an elliptic fibration $f:S\to B$, and $Z_0$
is a nontrivial holomorphic vector field on $S$ tangent to the
fibres of $f$. Each fibre of $f$ is a smooth elliptic curve which
can be multiple, and outside multiple fibers $f$ is a locally
trivial fibration. Moreover $Z_0$ has empty zero set.

\noindent II) $S=\mathbb{C}^2/\Lambda$ is a 2-torus and $Z_0$ is a
linear vector field on it, that is, the quotient of a constant
vector field on $\mathbb{C}^2$.

\noindent III) $S$ is a $\mathbb{CP}^1$-bundle over an elliptic
curve $\mathcal{E}$, and $Z_0$ is transverse to the fibers and
projects on $\mathcal{E}$ to a constant vector field. In this case
$Z_0$ is the suspension of $\mathcal{E}$ via the representation
$\rho: \pi_{1}(\mathcal{E})\to Aut(\mathbb{CP}^1)$ associated to
the bundle structure, and it generates a Riccati foliation without
invariant fibres and whose monodromy map is $\rho$.

\noindent IV) $S$ is a rational surface, and up to a birational
map we have $Y=\mathbb{CP}^1\times \mathbb{CP}^1$ and
$Z_0=v_{1}\oplus v_{2}$, where $v_{1}$ and $v_{2}$ are holomorphic
vector fields on $\mathbb{CP}^1$.

In the course of the proof we will consider $S$ in some cases  as a differentiable manifold with
a given Riemannian metric $g$. If $(N,g)$ is a Riemannian manifold,
we denote by $d$ the distance given by the metric, and by $B^{d}_{r}(p)$ the open ball centered at $p$.
For the basic notions of Riemannian geometry that we will be used in the rest of the paper, see \cite{Petersen}.

We will analyze the possible cases for $Z_0$ and $S$. First note
that Case I) does not really occur since $\mathcal{F}$ has not
rational first integral.
\subsection{Cases II) and III)}

\begin{proposition}
\label{Proposicion1}
If $Z_0$ and $S$ are as in II) or III) any
irreducible component of $D\cup E$ is invariant by
$\tilde{\mathcal{F}}$.
\end{proposition}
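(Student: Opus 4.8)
The plan is to argue by contradiction: assume some irreducible component $D_1$ of $D\cup E$ is not $\tilde{\mathcal F}$-invariant and derive a contradiction with the completeness of $\tilde X$ on $\tilde C_z$. First I would transfer the problem to $S$. Since the contraction $s$ contracts only $\tilde{\mathcal F}$-invariant curves, $D_1$ is not contracted by it; tracing it through the ramified covering $h$ and the birational morphism $g$ (and noting that $h(P)$, being the exceptional divisor of $s$, is invariant, so $h^{-1}(D_1)$ is a genuine curve of $W$ not contained in $P$), the curve $\Delta_1:=g(h^{-1}(D_1))\subset S$ is a curve that is not invariant by the foliation generated by $Z_0$. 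In Cases II) and III) the vector field $Z_0$ has empty zero set, so this foliation is regular and its leaves cross $\Delta_1$ transversally along a Zariski-open subset of $\Delta_1$. Recall also from Remark~\ref{levantada} that the leaf $T_z$ of $Z_0$ containing the Riemann surface $\mathscr{M}_i$ that supports the complete field $V_i=r^{\ast}(s_{\ast}(\tilde X_{\mid\tilde C_z}))_{\mid\mathscr{M}_i}$ is isomorphic to $\mathbb C$ or $\mathbb C^{\ast}$, and that the rigidity of complete vector fields on $\mathbb C$ and $\mathbb C^{\ast}$ forces $V_i$ to coincide with $Z_{0\mid\mathscr{M}_i}$ up to a multiplicative constant when $T_z\cong\mathbb C^{\ast}$, and at worst to differ from it by a factor with a single zero when $T_z\cong\mathbb C$. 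Equivalently, writing $\bar X=F\cdot Y$ as in (\ref{tangentes}), the restriction of $F$ to the leaf $\bar L=g^{-1}(T_z)$ carrying the lift of $\tilde C_z$ is, in the natural parameter provided by the complete field $Z_0$, an affine function (constant if $T_z\cong\mathbb C^{\ast}$): in particular it has at most one zero and no poles on $\bar L$.

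The geometric core of the argument is to show that $T_z$ must nevertheless come arbitrarily close to $\Delta_1$. For this I would regard $S$ as a Riemannian manifold: in Case II) one takes the flat metric inherited from $\mathbb C^2$ through $\mathbb C^2\to\mathbb C^2/\Lambda=S$, and in Case III) a metric adapted to the $\mathbb{CP}^1$-bundle $S\to\mathcal E$ which makes the suspension flow of $Z_0$ act by isometries and restricts on $\mathcal E$ to its flat metric. In either case the flow of $Z_0$ acts by isometries, so $T_z$, with the induced metric, is a complete flat surface isometrically immersed in the compact manifold $S$; since $\tilde C_z$ is Zariski dense in $M$, the leaf $T_z$ is Zariski dense in $S$, hence of infinite area, and one estimates the distance from $T_z$ to the compact set $K\subset S$ consisting of a tubular neighbourhood of the images of the non-$\tilde{\mathcal F}$-invariant components of $D\cup E$ --- in particular of $\Delta_1$ --- obtaining that this distance is $0$. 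Pulling this back through $h$ and $g$, the complete trajectory $\tilde C_z$ accumulates on $D_1$.

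It then remains to contradict completeness. Choose a point $p$ in the transverse locus of $D_1$ on which $\tilde C_z$ accumulates, and coordinates $(x,y)$ near $p$ with $D_1=\{x=0\}$ and $\tilde{\mathcal F}$ given by $\{y=\text{const}\}$; write $\tilde X=u(x,y)\,\partial/\partial x$ and let $\omega=dx/u$ be the associated form of times along the leaves. If $u(p)\neq 0$, or if $\tilde X$ has a pole along $D_1$, then $\int dx/u$ converges as $x\to 0$, so the leaf through $p$ --- and, by the accumulation, $\tilde C_z$ --- reaches $D_1$ in finite complex time, which is impossible since $\tilde X$ is complete on $\tilde C_z$. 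If instead $\tilde X$ vanishes along $D_1$, then $F$ vanishes along $h^{-1}(D_1)$, while the accumulation of $\bar L$ on $h^{-1}(D_1)$ takes place as the $Z_0$-parameter tends to infinity, where an affine function of that parameter cannot tend to $0$, contradicting the description of $F_{\mid\bar L}$ obtained above; alternatively one reproduces the argument of Lemma~\ref{lema1}, using a generic transversal fibre, to conclude $\tilde X\equiv 0$. Hence no such $D_1$ exists, and every irreducible component of $D\cup E$ is $\tilde{\mathcal F}$-invariant.

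The step I expect to be the main obstacle is precisely the distance estimate: proving that the complete trajectory necessarily approaches the non-$\tilde{\mathcal F}$-invariant part of $D\cup E$. In Case II) this is immediate from the density of $T_z$ in the torus $S$, but in Case III) --- where $T_z$ is only an (infinite) covering of the elliptic base $\mathcal E$ and need not be dense in $S$ --- one genuinely needs the Riemannian computation to control how close $T_z$ comes to $\Delta_1$, and to combine it with the rigidity of $F_{\mid\bar L}$ in order to dispose of the subcase in which $\tilde X$ vanishes along $D_1$.
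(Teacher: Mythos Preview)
Your setup coincides with the paper's: argue by contradiction, push the non-invariant component $D_0$ to a compact transverse curve $Q_0\subset S$ (your $\Delta_1$), and bring in a Riemannian metric to relate the special leaf $T_z\supset\mathscr M_i$ to $Q_0$. The divergence is in how the contradiction is closed, and there your argument has a genuine gap.

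The paper does not merely estimate $d(T_z,Q_0)$; it proves that \emph{every} leaf of $Z_0$ meets $Q_0$. This is done through the continuous function $\alpha(t)=d(L_t,\mu^{-1}(Q_0))$ on the parameter space of leaves and a flow-box argument forcing $\alpha\equiv 0$ (Lemmas~\ref{lema3}--\ref{alfa0}). From this, $T_z$ meets $Q_0$ in exactly one point (since $\mathscr M_i$ avoids $Q_0$ and $T_z\setminus\mathscr M_i$ has at most one point by completeness, Lemma~\ref{ceros}), so $T_z\cong\mathbb C$. The contradiction is then topological, not analytic: in Case~II, Ghys's theorem on leaf closures of a regular codimension-one holomorphic foliation forces $\overline{T_z}$ to be all of $S$ or a real $3$-submanifold, and in either case $T_z$ must cross $Q_0$ infinitely often; in Case~III, $p|_{T_z}:T_z\cong\mathbb C\to\mathcal E$ is an infinite covering, so $T_z$ hits a generic fibre infinitely often, accumulates on some leaf, and therefore (since that leaf also meets $Q_0$) returns to $Q_0$ again --- contradicting the single intersection.

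Your endgame instead attempts a local analysis of $\tilde X$ near $D_1$ together with the affinity of $F|_{\bar L}$. The pole and nonvanishing subcases are fine, but in the subcase $\tilde X|_{D_1}=0$ your claim that ``$F$ cannot tend to $0$ as the $Z_0$-parameter tends to infinity'' only bites if $T_z$ genuinely returns to $\Delta_1$ at arbitrarily large times. A \emph{single} transverse crossing at finite parameter gives $F$ one zero, perfectly compatible with an affine $F$; you need at least two intersections, and that recurrence is precisely what you have not established. Your appeal to ``density of $T_z$ in $S$'' in Case~II is unjustified: a non-algebraic leaf of a linear foliation on a complex $2$-torus can have real $3$-dimensional closure without being dense, and Zariski density plus infinite area do not by themselves force recurrence near a given compact set. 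In Case~III you flag the difficulty without resolving it. Finally, the fallback to ``the argument of Lemma~\ref{lema1}'' is unavailable: that argument hinges on the elliptic fibration of the Turbulent case, which has no analogue here. What is missing from your proposal is exactly the pair of ingredients the paper supplies --- the ``every leaf meets $Q_0$'' lemma and the Ghys/covering step turning one intersection into infinitely many.
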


\begin{proof}
Let ${D}_{0}$ be an irreducible component of $D\cup E$ that
is not invariant by $\tilde{\mathcal{F}}$. There is a compact
curve ${Q}_{0}$ (possibly singular) in $S$ generically transversal to
$Z_0$. It is enough
to define ${Q}_0$ as one of the connected components of $
r^{-1}(s({D}_0))$. Note that $s({D}_0)$ is not a point.

\vspace{0.25cm}

\noindent {\bf Case II)}. Let us take $S$ as the quotient manifold
$\mathbb{C}^2/\Lambda$. We identify $\mathbb{C}^{2}$ with  $\mathbb{R}^{4}$, and $\Lambda$ is an integral lattice of rank four.

\begin{remark}\label{toro}
\em
Let $\mu: \mathbb{C}^2 \to \mathbb{C}^2/\Lambda$ denote the
canonical submersion map. If we consider $\mathbb{R}^{4}$ with the
usual euclidean $g$, taking $g'=\mu_{\ast}g$ as the metric on $\mathbb{C}^{2}/\Lambda$, the map $\mu$ becomes a
Riemannian covering map. We will denote  by $d$ and $d'$
the distances in $(\mathbb{R}^4,g)$ and
$(\mathbb{R}^4/\Lambda,g')$, respectively. \em
\end{remark}
The vector field $Z_0$ is the projection by $\mu$ of a constant
vector field on $\mathbb{C}^2$, and thus its trajectories must be
of the form $\mu(L_t)$ where $\{L_t\}_{t\in\mathbb{C}}$ is the
family of lines parallel to a given direction. Note that \em $Z_0$
has not singularities. \em

\begin{lemma}\label{lema3}
There is a compact $K\subsetneq S$ such that
${Q}_{0}\subset\mathring{K}$
\end{lemma}
\begin{proof}
Since $Z_0$ is complete and without singularities we can define
\begin{equation}
\label{k} K=\{\varphi(T,z)\,|\,|T|\leq{1},\,z\in {Q}_0\}
\end{equation}
where $\varphi: \mathbb{C}\times S \to S$ is the complex flow of
$Z_0$. If we apply the Flow Box Theorem to the points of
${Q}_0$ we easily deduce that
${Q}_{0}\subset\mathring{K}$.
\end{proof}
We define the following
function
\begin{equation}
\label{alfa}
\begin{split}
\alpha:\,\, & \mathbb{C}\to[0,+\infty)\\
   & t\mapsto \,\,\,d(L_{t},\mu^{-1}({Q}_{0}))
\end{split}
\end{equation}
\begin{remark}\label{funcionalfa}
\em  $\alpha$ \em is continuous. \em For any sequence
$\{t_n\}_{n\in\mathbb{N}}\subset\mathbb{C}$ converging to
$t_{\ast}$ as $n\to\infty$, one sees that  $\alpha(t_n)\leq
d(L_{t_n},L_{t_{\ast}})+ \alpha(t_{\ast})$ and  $\alpha(t_{\ast
})\leq d(L_{t_{\ast}},L_{t_{n}})+ \alpha(t_{n})$. Then, $\lim_{\,n\to\infty}
\alpha(t_n)=\alpha(t_{\ast})$.
\end{remark}
\noindent We  will use that $\alpha$ has the following property
with respect to $K$.
\begin{lemma}
\label{propiedadalfa} $\alpha(t)\neq{0}$ if and only if
$\mu(L_t)\cap\mathring{K}=\emptyset$
\end{lemma}
\begin{proof}
If $\alpha(t)\neq{0}$, it is clear from (\ref{k}) that
$\mu(L_t)\cap\mathring{K}=\emptyset$. Otherwise
$\mu(L_t)\cap {Q}_{0}\neq \emptyset$, which is not possible
with our assumptions.

If $\alpha(t)=0$, we suppose $\mu(L_t)\cap\mathring{K}=\emptyset$
and obtain a contradiction.

\vspace{0.15cm}

\noindent \em Fact 1.  There is $\delta\in\mathbb{R}^{+}$ such
that $d'(\mu(L_t),{Q}_{0})\geq{\delta}$. \em

\noindent Otherwise we can determine a sequence $\{x_{n}\}_{n\in
\mathbb{N}}\subset {Q}_{0}$ converging to $x_{\ast}\in
{Q}_{0}$ and such that $d'(\mu(L_t),x_n)<1/n$ because
${Q}_{0}$ is compact and
$(\mathbb{R}^4/\Lambda,g')$ is complete. But it implies that for any ball
$\mathbb{B}^{d'}_{r}(x_{\ast})$ there exists $n(r)\in\mathbb{N}^{+}$ such
that $\mathbb{B}^{d'}_{1/n(r)}(x_{n(r)})\subset
\mathbb{B}^{d'}_{r}(x_{\ast})$, and hence
$\mathbb{B}^{d'}_{r}(x_{\ast})\cap \mu(L_t)\neq \emptyset$, which
contradicts our assumption
$\mu(L_t)\cap {Q}_{0}=\emptyset$.

\vspace{0.2cm}
\noindent \em Fact 2.
$d({L}_t,\mu^{-1}({Q}_{0}))\geq{\delta}$. \em

\noindent By contradiction, suppose that $d({L}_t,\mu^{-1}({Q}_{0}))<{\delta}$. Then there are
$z\in {L}_t$ and $\bar{z}\in\mu^{-1}({Q}_{0})$ with
$d(z,\bar{z})<{\delta}$. Note that $\mu^{-1}({Q}_{0})$ is
an analytic variety (non necessarily compact) of $\mathbb{C}^2$
and that $\mu(z)\neq \mu(\bar{z})$ by Fact~1. Let $c$ be a
segment from $z$ to $\bar{z}$. As $\mu$ defines a local isometry
from $(\mathbb{R}^4,g)$ to $(\mathbb{R}^4/\Lambda,g')$, we can
take $\mathbb{B}^{d}_{r_{i}}(z_{i})\subset \mathbb{R}^4$, $0\leq i
\leq s$, centered at $z_i\in c$, where $z_0=z$ and $z_s=\bar{z}$,
and in such a way that $\mu$ restricted to each
$\mathbb{B}^{d}_{r_{i}}(z_{i})$ defines an isometry over its
image. Moreover, we can assume that
$\mathbb{B}^{d}_{r_{i}}(z_{i})\cap\mathbb{B}^{d}_{r_{j}}(z_{j})\neq
\emptyset$ if and only if $j=i+1$, and thus fix $s-1$ points
$z_{i,i+1}$ in these  intersections. As the isometries preserve
intrinsic distance, Fact~1 and the triangle inequality implies the
following contradiction
\begin{equation*}
\begin{split}
\delta >  d(z,\bar{z})= & \sum_{i=1}^{s-1} d(z_{i},z_{i,i+1}) + d(z_{i,i+1},z_{i+1}) =\\
                        & \sum_{i=1}^{s-1}  d'(\mu(z_{i}),\mu(z_{i,i+1})) + d'(\mu(z_{i,i+1}),\mu(z_{i+1})) \geq \\
                        &  d'(\mu(z),\mu(\bar{z}))\geq \delta
\end{split}
\end{equation*} \end{proof}
\begin{lemma}\label{alfa0}
$\mu(L_{t})\cap {Q}_{0}\neq{\emptyset}$ for any
$t\in\mathbb{C}$, and then $\alpha\equiv{0}$.
\end{lemma}
\begin{proof}
Suppose, by contradiction, that $\mu(L_{t})\cap {Q}_{0}=
\emptyset$. It implies that
$\mu(L_{t})\cap\mathring{K}=\emptyset$. On the other hand, by
Lemma~\ref{propiedadalfa}, $\alpha(t)\neq{0}$. Then
$\alpha^{-1}(0)$ is a closed set strictly contained in
$\mathbb{C}$, and if we take $\tilde{t}$ on its boundary we can fix a sequence $\{t_{n}\}_{n\in\mathbb{N}}$
with $\alpha(t_{n})\neq{0}$ converging to $\tilde{t}\in\mathbb{C}$
with $\alpha(\tilde{t})=0$. Note that
$\mu(L_{\tilde{t}})\cap {Q}_{0}\neq\emptyset$ due to
$\mu(L_{\tilde{t}})\cap\mathring{K}\neq\emptyset$ since
$\alpha(\tilde{t})=0$ (Lemma~\ref{propiedadalfa}).

Let us take $\tilde{x}\in \mu(L_{\tilde{t}})\cap {Q}_{0}$
with $\mu(\tilde{z})=\tilde{x}$, and set
$\{z_n\}_{n\in\mathbb{N}}$ converging to $\tilde{z}$ with $z_n\in
L_{t_n}$. By continuity, $\{\mu(z_n)\}_{n\in\mathbb{N}}$ must
converge to $\tilde{x}$. However, as $z_n\in L_{t_n}$ for any $n$,
it holds $\mu(z_n) \notin \mu(L_{t_n})\cap\mathring{K}$ since
$\alpha(t_n)\neq{0}$ (Lemma~\ref{propiedadalfa}), what is a
contradiction. Then $\mu(L_{t})\cap
{Q}_{0}\neq{\emptyset}$.
\end{proof}
It follows from Remark~\ref{levantada} that $\mathscr{M}_i$ is
contained in a trajectory of $Z_0$. Hence there is $L_{s_i}$ such
that $\mu(L_{s_i})\supset\mathscr{M}_i$.
\begin{lemma}\label{ceros}
$\mu(L_{s_i})\cap {Q}_{0}=\{p_i\}$, where $p_{i}$ is the unique point in
$\mu(L_{s_i})\setminus\mathscr{M}_i$. In particular, $\mu(L_{s_i})$ and $\mathscr{M}_i$ are respectively biholomorphic to $\mathbb{C}$ and
$\mathbb{C}^{\ast}$.
\end{lemma}
\begin{proof} Lemma~\ref{alfa0} implies that $\mu(L_{s_i})\cap
{Q}_{0}\neq{\emptyset}$. Moreover it is clear that
$\mu(L_{s_i})\cap
{Q}_{0}\subset\mu(L_{s_i})\setminus\mathscr{M}_i$. It follows from
Remark~\ref{levantada} that $r^{\ast} (s_{\ast}(\tilde{X}_{\mid
\tilde{C}_z}))_{\mid\mathscr{M}_{i}}$ is complete and then
it extends holomorphically
by zeroes on $\mu(L_{s_i})\setminus\mathscr{M}_i$. Since $C_{z}$ is not algebraic,
$\mathscr{M}_i$ is biholomorphic to $\mathbb{C}^{\ast}$ and  $\mu(L_{s_i})\cap {Q}_{0}=\mu(L_{s_i})\setminus\mathscr{M}_i$ is an unique point $p_{i}$. Thus $\mu(L_{s_i})$ must be biholomorphic to $\mathbb{C}$.
\end{proof}

Since the foliation defined by $Z_{0}$ on  $S$ has codimension $1$ and it has not singularities,
the closure of $\mu(L_{s_i})$ in the open set $U'\subset S$ of non-compact leaves, that we will denote by $L'$, is a subvariety of real codimension $0$, $1$ or $2$ \cite[Th\'eor\`eme 1.4]{Ghys}. \em It holds $U'=S$ and then $L'$ is the closure of $\mu(L_{s_i})$ in $S$. \em If there were one compact leaf $J$, \cite[Th\'eor\`eme 1.4]{Ghys} also assures that any non-compact leaf must accumulate $J$. In particular $\mu(L_{s_i})$ accumulates $J$. On the other side, as ${Q}_{0}$ cut any leaf (Lemma~\ref{alfa0}), it must cut $J$, and $\mu(L_{s_i})$ accumulates the points of $J\cap {Q}_{0}$, which is not possible since $p_{i}$ is the unique point in $\mu(L_{s_i})\setminus\mathscr{M}_i$ (Lemma~\ref{ceros}). Note that \em $L'$ is $S$ or a real compact subvariety of dimension three.
\em If $L'$ had real codimension $2$, it would define a real compact subvariety of dimension two of $S$ ($L'$ is closed in $S$) containing $\mu(L_{s_i})$, which is a non-algebraic leaf. One concludes that $\mu(L_{s_i})$ must intersect infinitely many times ${Q}_{0}$, and then one obtains again a contradiction with Lemma~\ref{ceros}.

\vspace{0.15cm}

\noindent {\bf Case III)}. Let us consider $S$ as
$\mathbb{CP}^1$-bundle over an elliptic curve $\mathcal{E}$ with
bundle projection $p:S\to \mathcal{E}$. The structure of $S$  can
be lifted as $\mathbb{CP}^1$-bundle $\tilde{S}$ over $\mathbb{C}$
via the universal covering map $\Gamma:\mathbb{C}\to\mathcal{E}$:
we can determine a complex surface $\tilde{S}$, a holomorphic
covering $F:\tilde{S}\to S$ and a bundle projection
$\tilde{p}:\tilde{S}\to\mathbb{C}$ such that $p\circ F= \Gamma
\circ \tilde{p}$.
$$
\xymatrix{   \tilde{S} \ar[d]^{\tilde{p}}  \ar[r]^{F}  &  S  \ar[d]^{p} &  \\
                 \mathbb{C}   \ar[r]_{\Gamma}             &    \mathcal{E}     }
$$
Moreover as $\mathbb{C}$ is contractible this $\mathbb{CP}^1$-bundle is trivial. Thus
$\tilde{S}=\mathbb{C}\times\mathbb{CP}^1$ and $\tilde{p}(x,y)=x$ is the projection over the first factor.

\begin{lemma}\label{horizontal}
There is a holomorphic automorphism $\sigma$ of
$\mathbb{C}\times\mathbb{CP}^1$ such that $\sigma^{\ast}(F^{\ast}Z_0)$
is the horizontal vector field.
\end{lemma}
\begin{proof}
It is clear that $F^{\ast}Z_0$ generates a Riccati foliation adapted to $\tilde{p}$ and without invariant fibres.
If $\sigma(t,y)=\tilde{\varphi}(t,0,y),$
with $\tilde{\varphi}$ the complex flow of $F^{\ast}Z_0$,
$\sigma$ is bijective, since each trajectory of $F^{\ast}Z_0$ intersects each fibre of $p$ in only one point,
and $\sigma(\mathbb{C}\times \{y\})$ are the trajectories of $F^{\ast}Z_0$.
\end{proof}

After Lemma~\ref{horizontal}, the trajectories of $Z_0$ are of the
form $(F\circ \sigma)(L_t)$ where $\{L_t\}_{t\in\mathbb{CP}^{1}}$
is now the family of lines $L_{t}= \mathbb{C}\times\{t\}$.

\begin{remark}\label{metrica}
\em As $S$ is compact, $S$ (as real manifold) admits a Riemannian
metric $g'$. Let us set
$\bar{\mu}=F \circ \sigma$. The map $\bar{\mu}$
from $(\mathbb{R}^2 \times \mathbb{S}^2, {\bar{\mu}}^{\ast} g')$
to $(S,g')$ is a local Riemannian isometry. But still more, $F$ is
a covering map and $\sigma$ is a biholomorphism, hence
$F \circ \sigma$ is also a covering map and $\bar{\mu}$ is a
Riemannian covering map. As $(S,g')$ is compact, it is complete,
and  $(\mathbb{R}^2 \times
\mathbb{S}^2, {\bar{\mu}}^{\ast} g')$ is complete. We will denote
by $d$ and $d'$ the distances in $(\mathbb{R}^2 \times
\mathbb{S}^2, {\bar{\mu}}^{\ast} g')$ and $(S,g')$, respectively.
\em
\end{remark}
\em The vector field $Z_0$ is complete and without zeroes. \em We
will consider as in case II) an irreducible component ${D}_{0}$ of
$D\cup E$ that is not invariant by
$\tilde{\mathcal{F}}$, and the compact curve $Q_{0}$
(possibly singular) in $S$ generically transversal to $Z_0$,
defined by one of the connected components of $r^{-1}(s({D}_0))$.
As in Lemma~\ref{lema3} we can determine a compact set
$K\subsetneq S$ such that $Q_0 \subset \mathring{K}$.

We will consider the continuous map (it follows as in Remark 7)
\begin{equation}
\label{alfabar}
\begin{split}
\bar{\alpha}:\,\, & \mathbb{CP}^{1}\to[0,+\infty)\\
   & t\mapsto \,\,\,d(L_{t},\bar{\mu}^{-1}({Q}_{0}))
\end{split}
\end{equation}

Once we have fixed $\bar{\mu}=F \circ \sigma$, the complete
metrics in Remark~\ref{metrica}, the compact set $K$ and the map
$\bar{\alpha}$ as (\ref{alfabar}), we can prove similar Lemmas to
Lemmas~\ref{lema3}, \ref{propiedadalfa}, \ref{alfa0} and
\ref{ceros} of Case III), where  $\mu$ and $\alpha$ must be
substituted by $\bar{\mu}$ and $\bar{\alpha}$ in their statements.

Let $L_{s_i}$ be such that
$\bar{\mu}(L_{s_i})\supset\mathscr{M}_i$. Since  $p_{\mid
\,\,\bar{\mu}(L_{s_i})}: \bar{\mu}(L_{s_i}) \to \mathcal{E}$ is a
covering map, and $\bar{\mu}(L_{s_i})$ is biholomorphic to
$\mathbb{C}$ (Lemma\,\ref{ceros}), \em $\bar{\mu}(L_{s_i})$ must
cut almost all the fibres of $p$ infinitely many times. \em Let
$\kappa\in\mathcal{E}$ such that $p^{-1}(\kappa)$ contains an
infinite sequence of different points in
$p^{-1}(\kappa)\cap\bar{\mu}(L_{s_i})$. By compactness of
$p^{-1}(\kappa)$, the above sequence converges to  $q_{1}\in
p^{-1}(\kappa)$. Note that $q_{1}$ is a regular point of $Z_{0}$.
If $\bar{\mu}(L_{\tilde{s}})$ is the trajectory of $Z_{0}$ through
$q_{1}$, $\bar{\mu}(L_{s_i})$ must accumulate
$\bar{\mu}(L_{\tilde{s}})$ (flow-box theorem). On the other hand,
$\bar{\mu}(L_{\tilde{s}})\cap {Q}_{0}\neq\emptyset$
(Lemma~\ref{alfa0}) implies a contradiction with  the fact that
$p_{i}$ is the unique point in
$\bar{\mu}(L_{s_i})\setminus\mathscr{M}_i$ (Lemma~\ref{ceros}).

\begin{remark}
\em One can also obtain a contradiction by distinguishing several
cases, according to the (abelian) monodromy $\Gamma\subset
Aut(\mathbb{CP}^{1})$. If $\Gamma$ has rank 1, then the
non-algebraic leaves of $Z_{0}$ are isomorphic to
$\mathbb{C}^{\ast}$, and one gets a contradiction by using
Remark~5, and the fact that the intersection with algebraic curves
is nonempty. If $\Gamma$ has rank $2$ then $\Gamma=<f,g>$ with
$f(z)=\alpha z$, $g(z)=\beta z$ or $f(z)=z+1$, $g(z)=z+w$. In the
first case (where, moreover, ${\alpha}^{n}{\beta}^{m}\neq {1}$ for
every $(m,n)\neq(0,0)$) the non-algebraic leaves are sufficiently
dense to apply the same argument as in case II. In the second case
one can prove that every algebraic curve $C\subset S$ different
from the elliptic curve $E=\{z=\infty\}$ must intersect $E$, and
from this fact it follows again that every non-algebraic
trajectory of $Z_{0}$ intersects $C$ infinitely many times. \em
\end{remark}
\end{proof}
\subsection{Case IV)}
There is a birational transformation $G:
S\rightarrow\mathbb{CP}^1\times\mathbb{CP}^1$ sending $Z_0$ to
$G_{\ast} Z_{0}=v_{1}\oplus v_{2}$, where $v_{1}$ and $v_{2}$ are
holomorphic vector fields on $\mathbb{CP}^1$. The description of
$G$ can be found in \cite[p.\,87]{Brunella-impa}. In particular,
$G$ is a finite sequence of birational transformations which are
contractions of curves invariant by $Z_0$ or blowing-ups at zeros
of $Z_0$. Hence the exceptional divisor of $G$ does not meet
$\mathscr{M}_i$, and as consequence $G(\mathscr{M}_i)$ is
biholomorphic to $\mathscr{M}_i$. But still more, as
$\mathscr{M}_i$ supports a complete vector field according to
Remark~\ref{levantada}, we can define an entire curve
$f:\mathbb{C}\to G(\mathscr{M}_i)$. In absence of rational first
integrals, we may assume that  $G_{\ast} Z_0$ is not constant. \em
Note that $G(\mathscr{M}_i)$ is contained in a trajectory $L_{z}$ of
$G_{\ast} Z_0$ and that $L_{z}\setminus G(\mathscr{M}_i)$ is empty or
one point. \em There are two cases for $G_{\ast} Z_0$:

\noindent {\bf a)} \em  $v_{1}$ and $v_{2}$ with zeroes of order
one at $0$ $(\lambda z \partial / \partial z + \mu w \partial / \partial w)$. \em \\
\noindent {\bf b)} \em $v_{1}$ with a zero of order one at $0$
and $v_{2}$  constant $(\lambda z \partial / \partial z + \mu  \partial / \partial w)$. \em


\begin{proposition}
\label{proposicion2} There exists at most an irreducible component
$D_{0}$ of $D \cup E$ that is not invariant by
$\tilde{\mathcal{F}}$. If $D_{0}$ exists:

\noindent 1) $G_{\ast} Z_0$ is as in \em{\bf b)};\em \\
\noindent 2) $r^{-1}(s({D}_{0}))= {Q}_0$; \\
\noindent 3) The strict transform $A_{0}$ of ${Q}_0$ by $G$ is $\overline{\{w=0\}}$; and \\
\noindent 4) $\tilde{\mathcal{F}}$ is a Riccati foliation adapted
to a rational map that projects by $\pi$ as a rational function
$R$ of type $\mathbb{C}$ or $\mathbb{C}^{\ast}$.
\end{proposition}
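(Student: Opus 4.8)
The plan is to suppose that a non-$\tilde{\mathcal{F}}$-invariant irreducible component $D_0$ of $D\cup E$ exists and to transport the situation to $\mathbb{CP}^1\times\mathbb{CP}^1$ via $G$. As in the proof of Proposition~\ref{Proposicion1}, let $Q_0$ be a connected component of $r^{-1}(s(D_0))$: a compact curve in $S$, generically transversal to $Z_0$, with no $Z_0$-invariant component (otherwise $D_0$ would be $\tilde{\mathcal{F}}$-invariant). Since $G$ carries $Z_0$ to $v_1\oplus v_2$ and contracts only $Z_0$-invariant curves, the strict transform $A_0=G(Q_0)$ is an algebraic curve with the same property relative to $v_1\oplus v_2$; in particular no component of $A_0$ is one of the invariant lines $\overline{\{z=0\}}$, $\overline{\{z=\infty\}}$, $\overline{\{w=\infty\}}$ (nor $\overline{\{w=0\}}$ in case {\bf a)}). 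The first key point is the disjointness
$$
Q_0\cap\mathscr{M}_i=\emptyset\qquad(0\le i\le l),
$$
equivalent --- the exceptional divisor of $G$ missing each $\mathscr{M}_i$ and $Q_0$ not being contracted --- to $A_0\cap G(\mathscr{M}_i)=\emptyset$. Indeed, $q\in Q_0\cap\mathscr{M}_i$ would force $r(q)\in s(D_0)\cap\hat{C}_z$; but $\hat{C}_z=s(\tilde{C}_z)$ avoids the exceptional set of $s$ and $s_{\mid\tilde{C}_z}$ is an embedding, so $s^{-1}(r(q))$ would lie in $\tilde{C}_z\cap D_0$, which is impossible because $\tilde{C}_z=\pi^{-1}(C_z)\subset U$ misses $D$ and, as the non-algebraic orbit $C_z$ avoids the zeroes of $X$ above which $E$ sits, misses $E$ too. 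Finally, each $\mathscr{M}_i$ is non-algebraic (else $\tilde{C}_z$, hence $C_z$, would be algebraic), so $G(\mathscr{M}_i)$ is a transcendental subset of the parabolic leaf $L_z$, with $L_z\setminus G(\mathscr{M}_i)$ empty or one point.

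The technical heart is the study of $A_0\cap L_z$. Parametrising $L_z$ by the complex flow of $v_1\oplus v_2$ through a point $(z_0,w_0)$ with $z_0\in\mathbb{C}^{\ast}$ gives the injective maps $t\mapsto(z_0e^{\lambda t},w_0e^{\mu t})$ in case {\bf a)} and $t\mapsto(z_0e^{\lambda t},w_0+\mu t)$ in case {\bf b)}, with $\lambda\ne 0$ and, in case {\bf b)}, $\mu\ne 0$ (otherwise $G_{\ast}Z_0$ is constant, or $\tilde{\mathcal{F}}$ admits the rational first integral $w$). For an irreducible component $A=\{P(z,w)=0\}$ of $A_0$, the restriction $f(t)=P(z(t),w(t))$ is a nonzero (as $L_z\not\subset A$) entire function of order one with at most one zero --- because $A\cap L_z\subset A_0\cap L_z$, which has at most one point since $A_0\cap G(\mathscr{M}_i)=\emptyset$ --- hence $f(t)=Q(t)e^{\gamma t}$ for a polynomial $Q$ with at most one root. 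In case {\bf a)} the exponents $j\lambda+k\mu$ of the monomials $z^jw^k$ are pairwise distinct (as $\lambda/\mu\notin\mathbb{Q}$, the rational case being excluded since it gives the rational first integral $z^q/w^p$), so matching $f(t)=\sum c_{jk}z_0^jw_0^ke^{(j\lambda+k\mu)t}$ with $Q(t)e^{\gamma t}$ forces $P$ to be a single monomial $z^jw^k$, up to a constant (Zariski density of $L_z$), i.e. $A$ equals $\overline{\{z=0\}}$ or $\overline{\{w=0\}}$ --- excluded. Thus case {\bf a)} does not occur, proving~1). In case {\bf b)} the same matching yields $P=z^{k_0}(w-c)^{d}$ up to a constant, for some $k_0\ge 0$, $d\ge 1$, $c\in\mathbb{C}$ (the alternative being $P$ a power of $z$, the invariant line $\overline{\{z=0\}}$), so $A=\overline{\{w=c\}}$; and $\overline{\{w=c\}}$ meets every transcendental leaf in exactly one point, which by disjointness cannot lie in $G(\mathscr{M}_i)$, so $L_z\setminus G(\mathscr{M}_i)$ is precisely that point and each $\mathscr{M}_i\cong\mathbb{C}^{\ast}$. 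Two distinct components of $A_0$, or components arising from two distinct non-invariant components $D_0,D_0'$, would then be lines $\overline{\{w=c\}}$, $\overline{\{w=c'\}}$ both through that one point of $L_z$, forcing $c=c'$ and --- strict transforms being injective on non-contracted curves --- the two to coincide; hence $A_0=\overline{\{w=c\}}$ is irreducible, $r^{-1}(s(D_0))=Q_0$, and $D_0$ is unique, which is~2) and the uniqueness. Translating $w\mapsto w-c$ (which preserves {\bf b)}) makes $A_0=\overline{\{w=0\}}$, giving~3).

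For~4), $v_1\oplus v_2=\lambda z\frac{\partial}{\partial z}+\mu\frac{\partial}{\partial w}$ is transversal to the fibres of the second projection $\mathrm{pr}_2\colon\mathbb{CP}^1\times\mathbb{CP}^1\to\mathbb{CP}^1$ everywhere except along the single invariant fibre $\overline{\{w=\infty\}}$, hence it is a Riccati foliation adapted to $\mathrm{pr}_2$, with $A_0=\overline{\{w=0\}}$ one of its non-invariant fibres. Pulling $\mathrm{pr}_2\circ G$ back to $W$ by $g$ gives a fibration adapted to $\bar{\mathcal{F}}=h^{\ast}\tilde{\mathcal{F}}$; since the invariant fibre is intrinsic to the Riccati foliation, this fibration is invariant under the deck transformations of $s\circ h=r\circ g$, so it descends to a rational map on $M$ to which $\tilde{\mathcal{F}}$ is Riccati-adapted, and then to a rational function $R$ on $\mathbb{C}^2$ via $\pi$; its generic fibre is a generic fibre $\{w=c\}\cong\mathbb{CP}^1$ with the one or two points of $D\cup E$ on it removed, hence of type $\mathbb{C}$ or $\mathbb{C}^{\ast}$. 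I expect the main obstacle to lie in the intersection analysis of the second paragraph --- using the structure of entire functions of order one (exponential sums) to pin down which curves can be generically transversal to $v_1\oplus v_2$ while meeting a prescribed transcendental leaf at most once --- together with the bookkeeping in~4) showing that the Riccati fibration descends to a rational function of type $\mathbb{C}$ or $\mathbb{C}^{\ast}$ on $\mathbb{C}^2$.
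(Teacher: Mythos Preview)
Your argument for parts 1)--3) and uniqueness is correct and follows a genuinely different route from the paper. Where the paper constructs (Lemma~\ref{lema8}) an entire curve in $\mathbb{C}^2\subset\mathbb{CP}^2$ avoiding three affine curves plus the line at infinity and invokes Green's Theorem to force algebraicity, you instead parametrise the leaf $L_z$ by the flow, write the restriction of a defining polynomial $P$ of a component of $A_0$ as an explicit exponential sum, and use Hadamard factorisation for entire functions of order~$\le 1$ with at most one zero to pin down the shape of $P$. This is more elementary (no Nevanlinna-type input) and more transparent: the dichotomy between cases {\bf a)} and {\bf b)} is visible directly in whether the exponential sum can collapse to $Q(t)e^{\gamma t}$. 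Both approaches ultimately exploit that $G(\mathscr{M}_i)$ misses $A_0$ and that $L_z\setminus G(\mathscr{M}_i)$ is at most a point; your route makes this constraint algebraically explicit, while the paper's route packages it into a hyperbolicity statement.

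Part 4), however, has a real gap. Your descent step (``the invariant fibre is intrinsic to the Riccati foliation, so the fibration is deck-invariant and descends'') is not justified. First, $h:W\to M$ is a ramified covering with no reason to be Galois, so deck-invariance alone would not suffice to push $\beta_W$ down. Second, even the deck-invariance itself is unclear: the foliation $\lambda z\,\partial_z+\mu\,\partial_w$ is Riccati with respect to \emph{both} projections of $\mathbb{CP}^1\times\mathbb{CP}^1$, so the fibration is not determined by the foliation, and an automorphism preserving $\bar{\mathcal{F}}$ need not preserve $\beta_W$. Third, even granting a descended map, your claim that its generic fibre is a single $\{w=c\}$ with one or two points of $D\cup E$ removed is unsupported. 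The paper handles all of this differently: it takes the \emph{trace} of $\beta_W$ along $h$ (which always produces a rational function $\beta_M$ on $M$, with no Galois hypothesis), then resolves base points and analyses the genus of the generic fibre of $\beta_M$ --- ruling out genus $\ge 2$ via a rational-first-integral argument and genus $1$ via the Turbulent case (Lemma~\ref{lema1}) --- to conclude $\tilde{\mathcal{F}}$ is Riccati with respect to $\beta_M$; finally it uses that $D_0$ lies in a fibre and the remaining components of $D\cup E$ are invariant (combined with the reasoning of Lemma~\ref{lema2}) to get the $\mathbb{C}$/$\mathbb{C}^\ast$ type. Your sketch for 4) would need substantial additional work along these lines.
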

\begin{proof}
Let  ${D}_0$ be a component of $D\cup E$  not invariant by $\tilde{\mathcal{F}}$, and $Q_{0}$ be
the curve in $S$ defined as in Proposition~\ref{Proposicion1}. If $G_{\ast} Z_0$ is as {\bf b)} let
us suppose  that  either there is one component $D_{j}$ of ${D}\cup {E}$ not
invariant by $\tilde{\mathcal{F}}$ and different from $D_{0}$ or there is other component of
$r^{-1}(s({D}_{0}))$ different from ${Q}_{0}$.
\begin{lemma}
\label{lema8} There exist an open set $B\subset
\mathbb{CP}^1\times\mathbb{CP}^1$ biholomorphic to $\mathbb{C}^2$
and an entire curve $\bar{f}:\mathbb{C}\to G(\mathscr{M}_i)\cap B$
tangent to ${G_{\ast} Z_0}_{\mid B}$ whose image avoids at least
three algebraic curves contained in $B$.
\end{lemma}
\begin{proof} We analyze the two cases:

\noindent $G_{\ast} Z_0$ as in {\bf a)}: Let $B$ be $\mathbb{CP}^1
\times \mathbb{CP}^1$ minus $\{z=\infty\}\cup\{w=\infty\}$. As
$\{z=\infty\}$ and $\{w=\infty\}$ are invariant by ${G_{\ast}
Z_0}$, $G(\mathscr{M}_i)\subset  B$. Note that ${G_{\ast} Z_0}$ on
$B$ is complete. If  $\bar {f}=f$, $\bar {f}:\mathbb{C}\to
G(\mathscr{M}_i)\subset B$  is an entire map whose image avoids at
least $\{z=0\}$, $\{w=0\}$ and $A_{0}\cap B$.

\noindent $G_{\ast} Z_0$ is as in {\bf b)}: Let $B$ be
$\mathbb{CP}^1 \times \mathbb{CP}^1$ minus $\{z=0\}\cup\{w=0\}$.
In this case $\{z=0\}$ is invariant by ${G_{\ast} Z_0}$ but
$\{w=0\}$ is not. Remark that any non-algebraic trajectory of
${G_{\ast} Z_0}$ is of type $\mathbb{C}$ and intersects $\{w=c\}$,
with $c\neq\infty$, in an unique point. More still, \em one can
suppose that $A_{0}\neq \overline{\{w=0\}}$. \em Otherwise
one define ${Q}_0$ as any other component of
$r^{-1}(s({D}_{0}))$ or $r^{-1}(s({D}_{j}))$, where  $D_{j}$ is a
component of ${D}\cup {E}$ not invariant by
$\tilde{\mathcal{F}}$  and $D_{j}\neq D_{0}$.

\noindent b.1) If $L_{z}\setminus G(\mathscr{M}_i)=\emptyset$, we take
$G(\mathscr{M}_i)\cap \{w=0\} =p$ and the trajectory
$G(\mathscr{M}_i)\cap B=L_{z}\setminus\{p\}\simeq\mathbb{C}^{\ast}$ of
${G_{\ast} Z_0}$ on $B$. As the universal covering of
$G(\mathscr{M}_i)\cap B$ is $\mathbb{C}$, there exists
$\bar{f}:\mathbb{C}\to G(\mathscr{M}_i)\cap B$ whose image
avoids at least the algebraic curves: $\{z=\infty\}$,
$\{w=\infty\}$ and $A_{0}\cap B$.

\noindent b.2) If $L_{z}\setminus G(\mathscr{M}_i)=q\in \{w=0\}$, the
argumentation is similar to b.1) since $G(\mathscr{M}_i)\cap
B=G(\mathscr{M}_i)= L_{z}\setminus\{q\}\simeq\mathbb{C}^{\ast}$ is a
trajectory of ${G_{\ast} Z_0}$ on $B$.

\noindent b.3) If $L_{z}\setminus G(\mathscr{M}_i)=q \not\in \{w=0\}$,
we take the automorphism of $\mathbb{CP}^1\times\mathbb{CP}^1$,
$(z,w)\mapsto \delta(z,w)=(z, w-q_2)$, where $q=(q_1,q_2)$. As
$\delta$ leaves invariant ${G_{\ast} Z_0}$ since
$\delta_{\ast}{G_{\ast} Z_0}={G_{\ast} Z_0}$ and $\delta(q)\in
\{w=0\}$, it is enough to apply b.2) to
$L_{z}\setminus\delta(G(\mathscr{M}_i))=\{\delta(q)\}$.
\end{proof}
Let $\mathbb{CP}^2$ be the compactification of $B$. The image of
$\bar{f}:\mathbb{C}\to G(\mathscr{M}_i)\cap B$ is contained in
$\mathbb{CP}^2$ minus at least four hypersurface sections, that
is, three sections defined by the algebraic curves of
Lemma~\ref{lema8} along with the line at infinity
$\mathbb{CP}^2\setminus B$. According to Green's Theorem
\cite[p.\,199]{Lang}, $\bar{f}(\mathbb{C})$ must be contained in
some algebraic curve, what contradicts our assumptions. Hence $1),
2)$ and $3)$ of the statement of Proposition follows.

Note that $C_{z}$, $\tilde{C}_{z}$, $\hat{C}_{z}$,
$G(\mathscr{M}_i)$ and $\mathscr{M}_i$ are biholomorphic to
${\mathbb{C}}^{\ast}$, and that $L_{z}\simeq\mathbb{C}$ and
$G(\mathscr{M}_i)= L_{z}\setminus\{q\}\simeq\mathbb{C}^{\ast}$
with $q\in\{w=0\}\setminus\{(0,0),(\infty,0)\}$.
$G(\mathscr{M}_i)$ has two parabolic ends, which are properly
embedded in the complementary set of
$\{z=\infty\}\cup\{w=\infty\}$ in $\mathbb{CP}^1 \times
\mathbb{CP}^1$. One ${\Sigma}_{1}$ defined by a punctured disk
centered at $q$, that is algebraic; and other ${\Sigma}_{2}$
defined by $G(\mathscr{M}_i)\setminus{\Sigma}_{1}$, that is
transcendental and accumulates $\{w=\infty\}$. Note that $G_{\ast}
Z_0$ has two saddle-nodes as singularities. One at $(0,\infty)$,
with strong separatrix inside $\{w=\infty\}$ and weak separatrix
inside $\{z=0\}$; and other one at $(\infty,\infty)$, with strong
separatrix inside $\{w=\infty\}$ and weak separatrix inside
$\{z=\infty\}$. On the other hand $G_{\ast} Z_0$ defines a Riccati
foliation adapted to $\beta(z,w)=w$. One may  assume (maybe after
blowing-up reduced singularities) that $\bar{\mathcal{F}}$ is
Riccati with respect to $\beta_{W}=\beta\circ G \circ g$ and that
$G\circ g$ is the contraction of curves inside fibers of
$\beta_{W}$  that produces the local models of
\cite[p.\,439]{Brunella-topology}. In this case all the fibers
$\{w=c\}$, with $c\neq \infty$, are transversal minus one that is
tangent, $\{w=\infty \}$, and of class $(d)$.

Since $h$ is an algebraic covering map from $W$ to $M$, the proper
mapping theorem allows to define the trace of $\beta_{W}$ as a
rational function $\beta_{M}$ on $M$ \cite{Gr}.
Moreover, one can assume that $\beta_{M}$ is a fibration after
eliminating its base points. Recall that the property of being reduced is stable by blowing ups. Moreover,
the possible dicritical components of the resolution of the pencil given by
$\beta_{M}$ must be transversal to the corresponding foliation.

By construction, the generic fiber $F$ of
$\beta_{M}$ is a curve transverse to $\tilde{\mathcal{F}}$. Note
that $D_{0}$ must be contained in a fiber $F_{0}$ of $\beta_{M}$
as a consequence of $3)$ in the statement of this Proposition. Let us
consider the following cases according to the genus of $F$.

\noindent $\bullet$ If $F$ is of genus $\geq 2$, it follows from
\cite[Theorem III.6.1]{PS} that $\tilde{\mathcal{F}}$ has a
rational first integral, which is not possible.\\
\noindent $\bullet$ If $F$ is of genus $1$, $\tilde{\mathcal{F}}$
is a Turbulent foliation. Let us see that this case neither occurs
because it would imply the existence of a rational first
integral as before. Indeed, note that $F$ does not cut $F_{0}$ since $\beta_{M}$ is
a fibration. On the other hand, $D\cap F\neq \emptyset$ by the
maximum principle. As $D_{0}$ is the unique irreducible component
of $D\cup E$ that is not invariant by $\tilde{\mathcal{F}}$, there
must be one $\tilde{\mathcal{F}}$-\,invariant component $D_{2}$ of
$D$ such that $D_{2}\cap F\neq \emptyset$. The existence of
$D_{2}$ implies that $\tilde{\mathcal{F}}$ has a first integral
(Lemma~1).\\
\noindent $\bullet$ If $F$ is of genus $0$, $\tilde{\mathcal{F}}$
is a  Riccati foliation. Let us see that $\tilde{\mathcal{F}}$
satisfies $4)$ of the statement of Proposition 2. After
contraction of rational curves each fiber of $\beta_{M}$ admits
one of the five possible models in \cite[\S\,7]{Brunella-ensp},
\cite[p.\,439]{Brunella-topology}. If there is one fiber ${F}_{1}$
tangent to $\tilde{\mathcal{F}}$, as $D_{0}$ is the unique
irreducible component of $D\cup E$ which is not invariant by
$\tilde{\mathcal{F}}$ and it is contained in $F_{0}$, then $F$
must cut $D$ in only one or two points near $F_{0}$. Then we can
conclude as in Lemma~2 that $\beta_{M}$ projects  by $\pi$ as a
rational function $R$ of type $\mathbb{C}$ or $\mathbb{C}^{\ast}$.
Finally, one shows that all the fibers of $\beta_{M}$ are not
transverse to $F$. In the contrary case, if $L_{0}$ is the leaf
defined by $\tilde{C}_{z}$, as the covering map
${{\beta}_{M}}_{|L}:L\to \mathbb{CP}^{1}$ is not finite (otherwise
$L$ is compact and $C_{z}$ is algebraic), $L$ must cut infinitely
many times $F_{0}$ and $C_{z}$ is not isomorphic to
$\mathbb{C}^{\ast}$, which is not possible.
\end{proof}
After Proposition 2 we can assume that any irreducible component
of ${D}\cup {E}$ is invariant by $\tilde{\mathcal{F}}$. Otherwise
Theorem~1 follows by the results of $\S 2$.

\subsection{Existence of a second integral}
Let us come back to the beginning of $\S 3$, and consider
(\ref{tangentes}).

\begin{lemma}
\label{lema9} It holds $Y^2F=0$. In particular $\bar{X}$ is
complete on the Zariski open set $W'$ of $W$ defined by $W
\setminus (\{F=0\} \cup \{F=\infty\}\cup P)$
\end{lemma}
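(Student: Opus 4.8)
Recall that $\bar{X} = F\cdot Y$ on $W$, where $Y$ is a rational vector field lifting the complete holomorphic vector field $Z_0$ on $S$ (with poles only along the $\bar{\mathcal{F}}$-invariant exceptional divisor $P$ of $g$), and $\bar{X} = h^*\tilde{X}$ is the lift of $\tilde X$ via the ramified covering $h$. The plan is to show that $F$ is a \emph{second integral} of $Y$, i.e. that the derivative of $F$ along $Y$ is itself a multiple of $F$ (equivalently, $d\log F$ is closed and $Y$-invariant in the appropriate sense), which is exactly the content of the equation $Y^2 F = 0$. The key point is that $Y$ and $\bar X$ generate the \emph{same} foliation $\bar{\mathcal F}$, so the function $F = \bar X/Y$ is constant along each leaf in the sense that its differential restricted to leaves is controlled; what one must extract is a global algebraic statement about how $F$ varies \emph{transversally}.

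**Main steps.** First I would pass to a local description: around a generic point of a leaf, choose foliated coordinates $(t,\tau)$ with $Y = \partial/\partial t$ (possible where $Y$ is holomorphic and nonvanishing, i.e. on $W\setminus P$ away from the isolated zeros of $Y$), so that $\bar X = F\,\partial/\partial t$. Since $\bar X$ is \emph{complete} on the parabolic leaf $\mathscr{M}_i$ (it restricts there to the complete vector field $r^*(s_*(\tilde X_{|\tilde C_z}))$, by Remark~\ref{levantada}), and $Z_0$ — hence $Y$ — is likewise complete on the corresponding trajectory $T_z$ of $Z_0$ up to the covering, the ratio $F$ restricted to that leaf is a holomorphic function on $\mathbb{C}$ or $\mathbb{C}^*$ that conjugates one complete field to another. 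By the classification of complete vector fields on $\mathbb{C}$ and $\mathbb{C}^*$ (only $\partial/\partial t$ and $t\,\partial/\partial t$, up to constants), $F_{|\mathscr{M}_i}$ is forced to be a constant or (in the $\mathbb{C}$ case with one zero, cf. the last sentence of Remark~\ref{levantada}) an affine function $t$ in the linearizing coordinate — in either case $F_{|\mathscr{M}_i}$ satisfies $(\partial/\partial t)^2 F = 0$, i.e. $Y^2F = 0$ along that leaf. Next, since the leaf $\mathscr{M}_i$ is Zariski dense in $W$ (it is the pullback of the Zariski dense trajectory $\tilde C_z$ under the dominant maps), the rational function $Y^2 F$ on $W$ vanishes on a Zariski dense set and hence vanishes identically: $Y^2 F \equiv 0$ on $W$. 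Finally, to deduce completeness of $\bar X$ on $W' = W\setminus(\{F=0\}\cup\{F=\infty\}\cup P)$, I would integrate: on $W'$ the field $Y$ is holomorphic and complete (away from its poles $P$ and, after removing $\{F=0,\infty\}$, the geometry is clean), and $Y^2F=0$ means $YF = \lambda F$ for a constant $\lambda$ (or $YF$ is a first integral times $F$; the identity $Y^2F=0$ says $YF$ is $Y$-invariant, i.e. a "first integral", which in this rational setting is forced to be a constant by absence of rational first integrals in the $\mathrm{kod}=0$ case — one must be slightly careful here). Then $\bar X = FY$ integrates explicitly: its flow is obtained from that of $Y$ by the time-reparametrization $dT = dt/F$, and because $F$ transforms multiplicatively along $Y$-orbits, this reparametrization is defined for all complex time on $W'$, giving completeness.

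**Main obstacle.** The delicate step is the passage from "$F$ restricted to the one parabolic leaf satisfies $(\partial_t)^2F=0$" to the global identity $Y^2F\equiv 0$ together with the correct interpretation of what $YF$ is. On a single leaf one only sees $F$ up to the leaf's uniformization, and one must be sure that the \emph{same} constant $\lambda$ (respectively the same affine normalization) works consistently — this is where Zariski density of $\mathscr{M}_i$ and the rationality of $F$ do the work: $Y^2F$ is an honest rational function, so vanishing on a dense set is vanishing everywhere, and no "consistency across leaves" has to be checked by hand. The second subtlety is ruling out that $YF$, which $Y^2F=0$ exhibits as a $Y$-invariant rational function, is a nonconstant first integral; here one invokes that in the $\mathrm{kod}(\tilde{\mathcal F})=0$ case under consideration $\tilde{\mathcal F}$ (hence $\bar{\mathcal F}$) has no rational first integral, so any rational function constant on leaves of $\bar{\mathcal F}$ — and $YF/F$ is such once $Y^2F=0$, since $Y(YF/F) = (Y^2F)/F - (YF)^2/F^2$... — must be constant. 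Assembling this carefully is the crux; once $YF = \lambda F$ is in hand, the completeness of $\bar X$ on $W'$ follows from an elementary reparametrization-of-time argument using the completeness of $Y$ there.
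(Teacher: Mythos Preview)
Your argument for $Y^2F=0$ is essentially the paper's: restrict to the lifted complete trajectory, parametrize by the flow of $Y$, observe that the pullback of $\bar X$ is $(F\circ\varphi_z)\,\partial/\partial T$ and is complete, deduce $F\circ\varphi_z$ is affine, hence $Y^2F$ vanishes on a Zariski-dense leaf and so identically. One cosmetic point: you refer to $\mathscr{M}_i\subset S$, but the function $F$ and the field $Y$ live on $W$; the paper works instead with a connected component $\mathscr{R}_0$ of $h^{-1}(\tilde C_z)\subset W$, which is where the argument properly takes place.

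The genuine problem is in your deduction of completeness on $W'$. You write that $Y^2F=0$ ``means $YF=\lambda F$ for a constant $\lambda$'' and that ``$F$ transforms multiplicatively along $Y$-orbits''. This is not what $Y^2F=0$ says. The identity $Y^2F=0$ means $YF$ is a first integral of $Y$, hence $F$ is \emph{affine} (not exponential) in the $Y$-time along each trajectory: $F\circ\varphi_z(T)=aT+b$. Your computation $Y(YF/F)=(Y^2F)/F-(YF)^2/F^2$ does not vanish when $Y^2F=0$, so $YF/F$ is \emph{not} a first integral and there is no route from $Y^2F=0$ to $YF=\lambda F$. Moreover, the detour through ``no rational first integral in $\mathrm{kod}=0$'' is unnecessary: even if $YF$ were a nonconstant first integral, what matters leafwise is only that $F\circ\varphi_z$ is affine on $\Omega_z$. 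The paper's argument is simply that for any $z\in W'$ the trajectory $S_z$ of $\bar X$ sits in a trajectory $R_z$ of $Y$, the pullback $\varphi_z^\ast\bar X=(aT+b)\,\partial/\partial T$ is a complete vector field on $\Omega_z$ (being linear or constant), and hence $\bar X_{\mid S_z}$ is complete. Replace your ``multiplicative'' reparametrization paragraph by this direct observation and the proof goes through.
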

\begin{proof}
Let $\mathscr{R}_{0}$ be a connected component of
$h^{-1}(\tilde{C}_z)$. As ${\mathscr{R}}_{0}$ does not meet the
exceptional divisor of $s:M\to \hat{M}$ then $h_{\mid {\mathscr{R}}_0}:
{\mathscr{R}}_0\to \tilde{C}_z$ is a non-ramified finite covering
map. Hence $h_{\mid \mathscr{R}_{0}}^{\ast} (\tilde{X}_{\mid
\tilde{C}_z})={\bar{X}}_{\mid \mathscr{R}_{0}}$ is complete. On
the other hand $\bar{X}$ and $Y$ are tangent on $ \mathscr{R}_{0}$
according to (\ref{tangentes}). Thus $\mathscr{R}_{0}$ is a
Riemann Surface contained in a trajectory $R_z$ of $Y$. Let
$\varphi_z:\Omega_z\to {R}_{z}$ be the corresponding solution.  We
have two possibilities from 3.- of Remark~\ref{segundas}:
\begin{enumerate}
\item[{i)}] $R_z\not \in {\{ R_{{z}_{i}}\}}_{i=1}^{s}$. Since
$Y_{\mid {R}_{z}}$ is complete $\Omega_z=\mathbb{C}$.

\item[{ii)}]$R_z\in {\{ R_{{z}_{i}}\}}_{i=1}^{s}$. Let us suppose
that $R_z= R_{{z}_{j}}$. We take the solution
$f_{{z}_{j}}:\mathbb{C}\to R_{{z}_{j}} \cup
{\{{\bar{\theta}}_{j_{l}}\}}_{l=0}^{h}$ of (\ref{restrict}) and
the discrete subset $\Delta =
{\{{f_{{z}_{j}}}^{-1}(\bar{\theta}_{j_{l}})\}}_{l=0}^{h}$ of
$\mathbb{C}$. Since ${f_{{z}_{j}}}_{\mid
\mathbb{C}\setminus\Delta}= \varphi_z$ then
$\Omega_z=\mathbb{C}\setminus\Delta$.
\end{enumerate}

It follows from i) and ii) that $\varphi_z$ is a \em univaluated
\em holomorphic map. Let us note that ${\bar{X}}_{\mid R_z}$ must
be also complete because ${\bar{X}}_{\mid \mathscr{R}_{0}}$ is
complete. Using these two facts and that $T\in\Omega_z\mapsto
\varphi_z (T)\in {R}_{z}$ is a covering map then
\begin{equation}\label{derivada}
\varphi_{z}^{\ast}({\bar{X}}_{\mid R_z})
=\varphi_{z}^{\ast}(\bar{X})(T)=(F\circ
\varphi_z(T))\cdot\varphi_{z}^{\ast}(Y)=(F\circ
\varphi_z(T))\frac{\partial}{\partial T}
\end{equation}
is a complete vector field on $\Omega_z$. What is only possible if
$\Omega_z=\mathbb{C}$ or $\mathbb{C}^{\ast}$ and
$(F\circ\varphi_z)(T)=aT+b$, for $a$, $b\in\mathbb{C}$. We
conclude that $Y(F)(\varphi_z(T))=(F\circ\varphi_z)^{'}(T)$ is
constant and hence $Y^2 F$ vanishes along $R_{z}$, which can be
assumed to be non-algebraic since $C_z$ is by hypothesis. Hence
$Y^2 F = 0$.

Let us take a point $z\in W'=W \setminus (\{F=0\} \cup
\{F=\infty\}\cup {P})$. If $S_{z}$ is the trajectory of
$\bar{X}$ through $z$, as $Y$ is holomorphic on $W'$ (1.- of
Remark~\ref{segundas}) and tangent to $\bar{X}$ on $S_z$ by
(\ref{tangentes}) then $S_z$ defines a trajectory $R_z$ of $Y$.
Since it holds (\ref{derivada}), the fact that $Y^2 F=0$ implies
that $\bar{X}_{\mid {S}_{z}}$ is complete.
\end{proof}

After Lemma~\ref{lema9}, $\bar{X}$ is complete on $W'=W \setminus
(\{F=0\} \cup \{F=\infty\}\cup P)$. According to 1.- and 2.- of
Remark~\ref{segundas},  $\tilde {X}$ is complete on $M\setminus
h(W \setminus W')$. By Propositions~\ref{Proposicion1} and
\ref{proposicion2}, $\tilde{X}$ is complete on $M\setminus
(h(W\setminus W')\cup E \cup D)$. If we project by $\pi$ we see
that $X$ is complete on a Zariski open set of $\mathbb{C}^2$ and
it can be extended to $\mathbb{C}^2$ as complete vector field.
Therefore $X$ is complete.

\bibliographystyle{plain}
\bibliographystyle{amsalpha}
\def\cprime{$'$} \def\cprime{$'$}

\end{document}